\newtheorem{theorem}{Theorem}[section]
\newtheorem{lemma}[theorem]{Lemma}
\newtheorem{remark}[theorem]{Remark}
\begin{document}
\title[Optimal lower bound for  stability of Sobolev inequality]{Optimal asymptotic lower bound for stability of fractional Sobolev inequality and the  stability of Log-Sobolev inequality on the sphere}
\author{Lu Chen}
\address[Lu Chen]{Key Laboratory of Algebraic Lie Theory and Analysis of Ministry of Education, School of Mathematics and Statistics, Beijing Institute of Technology, Beijing
100081, PR China}
\email{chenlu5818804@163.com}

\author{Guozhen Lu}
\address[Guozhen Lu]{Department of Mathematics, University of Connecticut, Storrs, CT 06269, USA}
\email{guozhen.lu@uconn.edu}

\author{Hanli Tang}
\address[Hanli Tang]{Laboratory of Mathematics and Complex Systems (Ministry of Education), School of Mathematical Sciences, Beijing Normal University, Beijing, 100875, China}
\email{hltang@bnu.edu.cn}
\address{}
\keywords{Stability of fractional Sobolev inequality, Optimal asymptotic lower bound, Stability of log-Sobolev inequality on the sphere, End-point differentiation argument, stability of Hardy-Littlewood-Sobolev inequality. }
\thanks{The first author was partly supported by the National Key Research and Development Program (No.
2022YFA1006900) and National Natural Science Foundation of China (No. 12271027). The second author was partly supported by a Simons grant and a Simons Fellowship from the Simons Foundation.  The third author
was partly supported by National Key Research and Development Program (No. 2020YFA0712900) and the Fundamental Research Funds for the Central Universities(2233300008)}


\begin{abstract}
In this paper, we establish the optimal asymptotic lower bound for the stability of fractional Sobolev inequality:
\begin{equation}\label{Sob sta ine}
\left\|(-\Delta)^{s/2} U \right\|_2^2 - \mathcal S_{s,n} \| U\|_{\frac{2n}{n-2s}}^2\geq C_{n,s} d^{2}(U, \mathcal{M}_s),
\end{equation}
where $\mathcal{M}_s$ is the set of maximizers of the fractional Sobolev inequality of order $s$, $s\in (0, 1)$ and $C_{n,s}$ denotes the optimal lower bound of stability. We prove that the optimal lower bound $C_{n,s}$ behaves asymptotically at the order of $\frac{1}{n}$ when $n\rightarrow +\infty$ for any fixed $s\in (0,1)$. This  extends the work by Dolbeault-Esteban-Figalli-Frank-Loss \cite{DEFFL} on the stability of the first order Sobolev inequality and quantify the asymptotic behavior for lower bound of stability of fractional Sobolev inequality established by the current author's previous work in \cite{CLT1} in the case of $s\in (0, 1)$. Moreover, $C_{n,s}$ behaves asymptotically at the order of $s$ when $s\rightarrow 0$ for any given dimension $n$. (See Theorem \ref{sta of so} for these asymptotic estimates.)  As an application of this asymptotic estimate as $s\to 0$, we also derive the global stability for the log-Sobolev inequality on the sphere established by Beckner in \cite{Be1992,Be1997} with the optimal asymptotic lower bound on the sphere through the stability of fractional Sobolev inequalities with optimal asymptotic lower bound and the end-point differentiation method (see Theorem \ref{sta of lg}). This sharpens the earlier work by the authors in \cite{CLT} where only the local stability for the log-Sobolev inequality  on the sphere was proved. We also obtain the asymptotically optimal lower bound for the Hardy-Littlewood-Sobolev inequality when $s\to 0$ for fixed dimension $n$ and when $n\to \infty$ for fixed $s\in (0, 1)$ (See Theorem \ref{HLS} and the subsequent Remark \ref{Remark-HLS}).
\end{abstract}

\maketitle
\section{Introduction}
\vskip0.3cm
The sharp Sobolev inequality in $\mathbb{R}^n$  for $0<s<\frac {n}{2}$  states
\begin{equation}
	\label{eq-sob}
	\left\|(-\Delta)^{s/2} U \right\|_2^2 \geq \mathcal S_{s,n} \| U\|_{\frac{2n}{n-2s}}^2
	\qquad\text{for all}\ U\in \dot H^s(\mathbb{R}^n)
\end{equation}
with
\begin{equation}
	\label{eq:sobconst}
	\mathcal S_{s,n} = (4\pi)^s \ \frac{\Gamma(\frac{n+2s}{2})}{\Gamma(\frac{n-2s}{2})} \left( \frac{\Gamma(\frac n2)}{\Gamma(n)} \right)^{2s/n}
	= \frac{\Gamma(\frac{n+2s}{2})}{\Gamma(\frac{n-2s}{2})} \ |\mathbb{S}^n|^{2s/n} \,,
\end{equation}
where $\dot H^s(\mathbb{R}^n)$ denotes the completion of $C_{c}^{\infty}(\mathbb{R}^n)$ under the norm $\|(-\Delta)^{\frac{s}{2}}U\|_2$. The sharp constant $\mathcal S_{s,n}$ of inequality (\ref{eq-sob}) has been computed first by Rosen \cite{Ro} in the case $s=1$, $n=3$ and then independently by Aubin \cite{Au} and Talenti \cite{Ta} in the case $s=1$ and general $n$. For general $s$, the sharp inequality was proved by Lieb in \cite{Li} as an equivalent reformulation of the sharp Hardy-Littlewood-Sobolev inequality in the case of conformal index. Furthermore, he also showed that the equality of Sobolev inequality \eqref{eq-sob} holds if and only if
$$U\in \mathcal{M}_s:=\left\{cV(\frac{\cdot-x_0}{a}): c\in \mathbb{R},\  x_0\in \mathbb{R}^n,\  a>0 \right\},$$
where $V(x)=(1+|x|^2)^{-\frac{n-2s}{2}}$.

\medskip

By the stereographic projection, we know that $\mathbb{R}^n$ (or rather $\mathbb{R}^n\cup\{\infty\}$) and $\mathbb{S}^n$ ($\subset \mathbb{R}^{n+1}$) are conformally equivalent. Thus, there exists an equivalent version of \eqref{eq-sob} on $\mathbb{S}^n$. This form was found explicitly by Beckner in \cite[Eq.~(19)]{Be1993}, namely,
\begin{eqnarray}
	\label{eq:sobsphere}
	\left\| A_{2s}^{1/2} u \right\|_2^2 \geq \mathcal S_{s,n} \|u\|_{\frac{2n}{n-2s}}^2
	\qquad\text{for all}\ u\in H^s(\mathbb{S}^n)
\end{eqnarray}
with
\begin{align}
	\label{eq:opas}
	A_{2s} = \frac{\Gamma(B+\tfrac12 + s)}{\Gamma(B+\tfrac12 - s)}
	\qquad\text{and}\qquad
	B = \sqrt{-\Delta_{\mathbb{S}^n} + \tfrac{(n-1)^2}{4}}, \
\end{align}
where $-\Delta_{\mathbb{S}^n}$ denotes the Laplace-Beltrami operator on the sphere $\mathbb{S}^n$ and the operator $B$ and $A_{2s}$ acting on spherical harmonics $Y_{l,m}$ of degree $l$   satisfy
 $$B Y_{l,m}=(l+\frac{n-1}{2})Y_{l,m},~~~A_{2s}Y_{l,m}=\frac{\Gamma(l+n/2+s)}{\Gamma(l+n/2-s)}Y_{l,m}.$$
 (We refer the reader to the books \cite{M} and \cite{SteinWeiss} for detailed exposition of spherical harmonics.)
In fact, the operators $A_{2s}$ is a $2s$-order conformally invariant differential operator and can be written as  $(-\Delta_{\mathbb{S}^n})^{s}+lower\ order\  terms$. For the integer $s$, they are related to the GJMS operators in conformal geometry \cite{FG, GrJeMaSp}.
Beckner also proved that the equality holds in (\ref{eq:sobsphere}) if and only if
\begin{eqnarray}\label{opt set}
u\in M_s:=\left\{c\left(\frac{\sqrt{1-|\xi|^2}}{1-\xi\cdot \omega}\right)^{\frac{n-2s}{2}}:\  \xi\in B^{n+1}, \ c\in \mathbb{R} \right\},
\end{eqnarray}
where $B^{n+1}:=\{\xi\in \mathbb{R}^{n+1}, |\xi|<1\}$.

Inequality (\ref{eq:sobsphere}) becomes an equality as $s\rightarrow 0$. Differentiating at $s=0$ and using the Funk-Hecke formula, Beckner \cite{Be1992,Be1997} proved the following conformally invariant logarithmic Sobolev inequality on
$\mathbb{S}^n$.
\vskip0.5cm
\textbf{Theorem A. }
\textit{Assume} $u\in L^2(\mathbb{S}^n)$. \textit{Then}
\begin{align}\label{L-S ine}
\iint_{\mathbb{S}^n \times \mathbb{S}^n} \frac{|u(\omega) - u(\eta)|^2}{|\omega - \eta|^n} \, d\omega d\eta  \geq C_n \int_{\mathbb{S}^n} |u(\omega)|^2 \ln \frac{|u(\omega)|^2 |\mathbb{S}^n|}{\|u\|_2^2} d\omega
\end{align}
 \textit{with sharp constant}
 \begin{equation}
\label{eq:becknerconst}
C_n = \frac{4}{n} \frac{\pi^{n/2}}{\Gamma(n/2)} \,.
\end{equation}
\textit{The equality holds if and only if}
  \begin{eqnarray}
\label{eq:beckneropt}
u(\omega) = c \left( 1-\xi\cdot\omega\right)^{-n/2}
\end{eqnarray}
 \textit{for some} $\xi\in B^{n+1}$ \textit{and some} $c\in \mathbb{R}$.
 \vskip0.1cm

We also note that Frank, K\"{o}nig and the third author \cite{FKT} classified that all the positive solutions of the associated Euler-Lagrange equation
 to the log-Sobolev inequality on the sphere are of the form \eqref{eq:beckneropt}.
\vskip0.5cm

In the past decades, there have been extensive works done in studying the
stability of functional and geometric inequalities. The stability of Sobolev inequality started from Brezis and Lieb. In \cite{BrLi} they asked if the following refined first order Sobolev inequality ($s=1$ in (\ref{eq-sob}))
holds for some distance function $d$:
$$\left\|(-\Delta)^{1/2} U \right\|_2^2 - \mathcal S_{1,n} \| U\|_{\frac{2n}{n-2}}^2\geq c d^{2}(U, \mathcal{M}_1).$$
This question was answered affirmatively  in a pioneering work by Bianchi and Egnell \cite{BiEg}, in the case $s=2$ by the second author and Wei \cite{LuWe} and in the case of any positive even integer $s<n/2$ by
Bartsch, Weth and Willem \cite{BaWeWi}. In 2013, Chen, Frank and Weth \cite{ChFrWe} established the stability of Sobolev inequality for all $0<s<n/2$. They proved that
\begin{equation}\label{Sob sta ine}
\left\|(-\Delta)^{s/2} U \right\|_2^2 - \mathcal S_{s,n} \| U\|_{\frac{2n}{n-2s}}^2\geq C_{n,s} d^{2}(U, \mathcal{M}_s),
\end{equation}
for all $U\in \dot H^s(\mathbb{R}^n)$, where $C_{n,s}$ denotes the optimal lower bound of stability of fractional Sobolev inequality, $d(U,\mathcal{M}_s)=\min\{\|(-\Delta)^{s/2}(U-\phi)\|_{L^2}:\phi \in \mathcal{M}_s\}$.  Recently, Dolbeault, Esteban, Figalli, Frank and Loss in \cite{DEFFL} obtained for the first time the optimal asymptotic behavior for the lower bound $C_{n,s}$ when $s=1$. In fact, they proved

\vskip0.3cm
\textbf{Theorem B. }
\textit{There is an explicit constant} $\beta>0$ \textit{such that for all} $n\geq 3$ \textit{and for all} $U\in \dot H^1(\mathbb{R}^n)$, \textit{there holds}
$$\left\|\nabla U \right\|_2^2 - \mathcal S_{1,n} \| U\|_{\frac{2n}{n-2}}^2\geq \frac{\beta}{n}d^2(U,\mathcal{M}_1).$$
As an application, they also established the stability for Gaussian log-Sobolev inequality.  The current authors  gave in \cite{CLT1}  the explicit lower bounds of general fractional Sobolev inequalities of order s when $0<s<\frac{n}{2}$ (including high-order and fractional-order cases) and Hardy-Littlewood-Sobolev inequalities.  However, it should be noted that the explicit lower bound obtained in \cite{CLT1} is not optimal in the asymptotic sense when $n\rightarrow +\infty$ for fractional order $0<s<\frac{n}{2}$. In this paper, we derive two main results: the optimal asymptotic lower bound when $n\to \infty$ for any fixed $s\in (0, 1)$ and when $s\to 0$ for any fixed dimension $n$. The latter result allows us to establish the global stability of the log-Sobolev inequality on the sphere using the endpoint differentiation theorem. We also note that the current authors in \cite{CLT2} have also obtained the stability for the high-order Sobolev and  fractional Sobolev inequalities for $1\leq s<\frac{n}{2}$ as $n\to \infty$ with the dimension-dependent constants. Moreover, we should point out that the method used in \cite{CLT2} is substantially different from the one used in this paper here and  only works for $1\leq s<\frac{n}{2}$, but not for $0<s<1$. Thus, this paper and \cite{CLT2} complement to each other.

\vskip0.1cm

For the study of stability of other kinds of functional
and geometric inequalities, we refer the interested readers to the papers \cite{BDNS}, \cite{CaF}, \cite{CFLL},
\cite{CFMP}, \cite{DFLL}, \cite{FiNe}, \cite{FiZh}, \cite{JF1}, \cite{JF2}, \cite{Ko1}, \cite{WW}  and references therein. The first main purpose of this paper is to establish the optimal asymptotic lower bound for stability of fractional Sobolev inequality when $n\rightarrow +\infty$ for any fixed $s\in (0, 1)$ and $s\rightarrow 0$ for any fixed dimension $n$ respectively. Our first result states

\begin{theorem}\label{sta of so}
For $0<s<1$ and $n\geq 3$ or $0<s<\frac{1}{6}$ and $n\geq 1$, then there exists a bounded constant $\beta_{s,n}$ such that for any $f\in H^{s}(\mathbb{S}^n)$, there holds
$$\left\| A_{2s}^{1/2} u \right\|_2^2-\mathcal S_{s,n} \|u\|_{\frac{2n}{n-2s}}^2\geq \frac{s}{n}\beta_{s,n} \inf_{h\in M_s}\|A_{2s}^{1/2}(f-h)\|\|_2^2.$$
\end{theorem}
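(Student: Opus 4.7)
The plan is to adapt the Bianchi--Egnell strategy, in the refined form used by Dolbeault--Esteban--Figalli--Frank--Loss \cite{DEFFL}, to the fractional setting on the sphere, while carefully tracking the dependence on $s$ and $n$. Working on $\mathbb{S}^n$ is crucial: the Möbius group acts transitively on $M_s$ and preserves both sides of the inequality, so it suffices to establish a local version in an $H^s$-neighbourhood of the constant $u \equiv 1$. On the sphere, $A_{2s}$ is simultaneously diagonalised by spherical harmonics with eigenvalues $\lambda_\ell = \Gamma(\ell + n/2 + s)/\Gamma(\ell + n/2 - s)$, which makes the local analysis explicit.

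Writing $u = 1 + \varphi$ with $\varphi = \sum_{\ell \geq 0}\varphi_\ell$ and Taylor-expanding $\|u\|_p^p$ to second order (with $p = 2n/(n-2s)$), the identity $\lambda_1 = (p-1)\lambda_0$, which reflects the Möbius degeneracy at the constant, makes the first-harmonic term vanish. The quadratic part of the deficit becomes
\begin{equation*}
\sum_{\ell \geq 2}\bigl(\lambda_\ell - (p-1)\lambda_0\bigr)\|\varphi_\ell\|_2^2 + R(\varphi),
\end{equation*}
and a short Gamma-function computation shows that the minimum over $\ell \geq 2$ is attained at $\ell = 2$ with value $(p-1)\lambda_0 \cdot \frac{2s}{n/2 + 1 - s} \asymp \frac{s}{n}(p-1)\lambda_0$, giving exactly the sharp scaling that the theorem predicts. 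To turn this infinitesimal gap into a local stability inequality on a genuine neighbourhood, I would control the cubic-and-higher remainder $R(\varphi)$ through the fractional Sobolev embedding $H^s(\mathbb{S}^n) \hookrightarrow L^p(\mathbb{S}^n)$, restricting $\varphi$ to be orthogonal to $Y_0 \oplus Y_1$ (the tangent space of $M_s$ at the constant). The key technical point at this step is that the embedding constants must be tracked uniformly in $s$ and $n$.

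For the local-to-global passage, I would combine the preceding estimate with the non-optimal but global stability already proved by the authors in \cite{CLT1}: outside any fixed neighbourhood of $M_s$, the ratio of deficit to distance is bounded below (with a possibly non-sharp constant), and a concentration--compactness/contradiction argument forces any near-minimising sequence of that ratio to concentrate on $M_s$, whereupon the local $s/n$ spectral gap applies. The principal obstacle is to make this reduction quantitative with a $\beta_{s,n}$ that remains uniformly bounded as $n \to \infty$ and $s \to 0$; this demands uniform-in-$(s,n)$ remainder estimates and a uniform Möbius-selection step analogous to the DEFFL continuity argument. Reassuringly, the factor $s$ at the $s \to 0$ endpoint is already built into the spectral gap, since $\lambda_\ell/\lambda_0 \to 1$ uniformly in $\ell$ on bounded ranges as $s \to 0$, so the deficit itself degenerates at rate $s$.
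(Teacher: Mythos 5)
Your spectral computation at the constant is correct (indeed $\lambda_2-(p-1)\lambda_0=\lambda_1\cdot\frac{4s}{n-2s+2}\asymp \frac{s}{n}\lambda_0$, which is where the sharp $s/n$ rate comes from), and the reduction by Möbius invariance to a neighbourhood of $u\equiv 1$ matches the paper. But there are two genuine gaps, and they are precisely the two points where the paper has to work hardest. First, your local step — second-order Taylor expansion plus control of the cubic remainder by the fractional Sobolev embedding — only yields a stability inequality on a neighbourhood whose size is comparable to the spectral gap divided by the embedding constant, hence a neighbourhood that \emph{shrinks} as $n\to\infty$ and $s\to 0$ (equivalently, it yields the $o(d^2)$-type local statement of Theorem C). What is needed, and what the paper proves in Lemma \ref{local sta}, is local stability on a neighbourhood of \emph{fixed} size $\delta_0$ independent of $s$ and $n$. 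This requires the DEFFL truncation $r=r_1+r_2+r_3$ at heights $\gamma$ and $M$, the pointwise inequality of Lemma \ref{q-estimate} for $(1+r)^q$, the superadditivity of the double integral under truncation (Lemma \ref{lem split}), and separate spectral estimates for each piece — not a Taylor expansion.

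Second, your local-to-global passage via concentration--compactness cannot work here: compactness arguments produce a positive constant in the far regime with no control on its dependence on $n$ and $s$ (and the explicit global bound of \cite{CLT1} that you propose to invoke is exactly the one whose $n$-asymptotics are \emph{not} of order $1/n$, so taking the minimum of the two regimes would destroy the claimed rate). The paper avoids this entirely: it dualizes the local statement to a local stability for the Hardy--Littlewood--Sobolev inequality via Carlen's duality (Step 1), passes from local to global for HLS by a quantitative rearrangement-flow argument in the spirit of DEFFL (Step 2), removes the positivity restriction (Step 4), and dualizes back (Step 5). You also do not address the sign-changing case at all, which is handled in Step 4 and is not automatic. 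So the proposal identifies the right infinitesimal mechanism but is missing the two quantitative devices (fixed-size truncated local estimate; flow-based rather than compactness-based globalization) that make the uniform constant $\beta_{s,n}$ attainable.
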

\begin{remark}
 We only prove that Theorem \ref{sta of so} holds when $0<s<\min\{1,n/6\}$. It includes the case $0<s<\frac{1}{6}$ $(n\geq 1)$
and the case $0<s<1$, ($n\geq 6$). When $s\in (0,1)$ and $n=3$, $4$, $5$,  we can apply the lower bound estimate for stability of general fractional Sobolev inequality obtained in \cite{CLT1} to get the lower bound $\beta_{s,n}$ independent of $s$ and $n$. We also note that the lower bound estimate obtained in \cite{CLT1} is away from zero when $n$ is away from $2s$. The explicit form of $\beta_{s,n}$ for $0<s<\min\{1,n/6\}$ can be found in (\ref{beta}) and in fact $\lim\limits_{(s,n)\rightarrow (0, +\infty)}\beta_{s,n}=\frac{\delta_0\varepsilon_0}{4(1+\delta_0)}$, where $\varepsilon_0\in (0,1/12)$ is a constant and $\delta_0$ is an explicit constant given by (\ref{delta0}) independent of $n$ and $s$. From Chen, Frank and Weth's work in \cite{ChFrWe}, we know
that the optimal lower bound $C_{n,s}$ satisfies
$$C_{n,s}\leq \frac{4s}{n+2+2s}.$$ Later, K$\ddot{o}$nig in \cite{Ko} proved that the optimal lower bound is strictly smaller than $\frac{4s}{n+2+4s}$. So the lower bound for stability of fractional Sobolev inequality we obtain in Theorem~\ref{sta of so} is optimal with $s$ and $n$ when $s\rightarrow 0$ and $n\rightarrow +\infty$ respectively.
 \end{remark}
\begin{remark}
Due to their equivalence between the fractional Sobolev inequalities on $\mathbb{R}^n$ and the sphere $\mathbb{S}^n$, Theorem \ref{sta of so}
also gives the same result of asymptotic estimates for the stability of fractional Sobolev inequalities \eqref{Sob sta ine} on $\mathbb{R}^n$.
\end{remark}

From the proof of Theorem \ref{sta of so}, we in fact also derive the asymptotic lower bound for the stability of the Hardy-Littlewood-Sobolev inequality when $s\rightarrow 0$ and $n\rightarrow +\infty$ respectively (see Section 3, Step 3-Step 4).
\begin{theorem}\label{HLS}
For $0<s<1$ and $n\geq 3$ or $0<s<\frac{1}{6}$ and $n\geq 1$, then there exists an constant $\beta$ independent of $s$ and $n$ such that for any $g\in L^{\frac{2n}{n+2s}}(\mathbb{R}^n)$, there holds
\begin{align}\label{sta hls func}
\|g\|^2_{\frac{2n}{n+2s}}-\mathcal S_{s,n} \|(-\Delta)^{-s/2}g\|_2^2\geq \frac{s}{n}\beta \inf\limits_{h\in M_{HLS}}\|g-h\|^2_{\frac{2n}{n+2s}}.
\end{align}
\end{theorem}

\begin{remark}\label{Remark-HLS}
We would like to point out that the lower bound for the stability of the Hardy-Littlewood-Sobolev  inequality in Theorem \ref{HLS} is  asymptotically  optimal when $s\rightarrow 0$ for fixed dimension $n$ and when $n\rightarrow +\infty$ for fixed $s\in (0, 1)$. In fact, if we denote by $C_{HLS}$ and $C_{S}$ the optimal lower bounds for the stability of HLS inequality and Sobolev inequality respectively, then by duality we can prove $C_{S}\geq  \frac{1}{2}C_{HLS}$ by repeating the proof of Section 4 in [15]. Since  $C_{S}\leq \frac{4s}{n+2+2s}<1$ when $0<s<\frac{n}{2}$, we must have $C_{HLS}\leq \frac{8s}{n+2+2s}$.
\end{remark}

\vskip0.3cm
The second goal of this paper is to establish the global stability of log-Sobolev inequality on the sphere. Such a log-Sobolev inequality on the sphere was established by Beckner in \cite{Be1992,Be1997}. However, the stability of this log-Sobolev inequality remains unsolved.  Recall that the authors in \cite{CLT} have established the local stability of the log-Sobolev inequality on the sphere.
Let
$$M=\left\{v_{c,\xi}(\omega)=c\left(1-\xi\cdot w\right)^{-n/2}:\, ~\xi\in B^{n+1}, \ c\in \mathbb{R}\right\}$$
be the optimizer set of the log-Sobolev inequality on the sphere $\mathbb{S}^n$. Denote the log-Sobolev functional by
$$LS(u)=\iint_{\mathbb{S}^n \times \mathbb{S}^n} \frac{|u(\omega) - u(\eta)|^2}{|\omega - \eta|^n} \, d\omega d\eta  - C_n \int_{\mathbb{S}^n} |u(\omega)|^2 \ln \frac{|u(\omega)|^2 |\mathbb{S}^n|}{\|u\|_2^2} d\omega.$$
We also denote $$\mathcal{D_L}=\left\{v\in L^2(\mathbb{S}^n):\iint\limits_{\mathbb{S}^n\times \mathbb{S}^n}\frac{|v(\omega)-v(\eta)|^2}{|\omega-\eta|^n}d \omega d \eta<\infty\right\},$$
and
$$d^2(u,M)=\inf\{\|u-\phi\|^2_{L^2(\mathbb{S}^n)}:\phi\in M\}=\inf_{(c,\xi)\in \mathbb{R}\times B^{n+1}}\left\langle u-v_{c,\xi}, u-v_{c,\xi} \right\rangle,$$
where $B^{n+1}=\{x\in \mathbb{R}^{n+1}: \,|x|<1\}$ and $d\omega$ is the surface measure on the sphere with $\int_{\mathbb{S}^n}d\omega=|\mathbb{S}^n|$.
The authors  proved in ~\cite{CLT} that

\vskip0.5cm
\textbf{Theorem C. }
\textit{Let} $u\in \mathcal{D_L}$ \textit{with} $\|u\|_{L^2}=1$. \textit{Then}
$$LS(u) \geq \frac{8\pi^{n/2}}{\Gamma(n/2)(n+2)} d^2(u,M)+o(d^2(u,M)),$$
\textit{where} $o(d^2(u,M))$ \textit{is only dependent on} $d(u,M)$ \textit{but independent of} $u$. \textit{Moreover, the constant}
$\frac{8\pi^{n/2}}{\Gamma(n/2)(n+2)}$ \textit{is sharp}.
\vskip0.5cm

The main difficulty to derive the global stability of the log-Sobolev inequality on the sphere from the local one is that it seems difficult  to use the Lions' type of  concentration-compactness argument \cite{Lions1, Lions2} in the log-Sobolev setting, which is a classic method to obtain the global stability before the work of Dolbeault et al.~\cite{DEFFL}. Since the log-Sobolev inequality on the sphere can be viewed as the limiting case of the fractional Sobolev inequality on the sphere by letting $s\rightarrow 0$, then the same property should be true for the stability of log-Sobolev inequality. Since we have obtained the optimal asymptotic lower bound for stability of fractional Sobolev inequality when $s\rightarrow 0$ in Theorem \ref{sta of so}, this makes it possible to establish the global stability of log-Sobolev inequality via the end-point differentiation argument. Indeed, we derive the following global stability for the log-Sooblev inequality on the sphere.
\begin{theorem}\label{sta of lg}
There is an explicit constant $\alpha_n>0$ such that for all $n\geq 1$, $u\in \mathcal{D_L}$, there holds
$$LS(u) \geq \alpha_n d^2(u,M).$$
\end{theorem}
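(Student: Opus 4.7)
The plan is to implement the end-point differentiation method advertised in the introduction: start from Theorem~\ref{sta of so} at small $s>0$, divide both sides by $s$, and let $s\to 0^+$. With
\[
D(s,u) := \|A_{2s}^{1/2}u\|_2^2 - \mathcal{S}_{s,n}\|u\|_{2n/(n-2s)}^2,\qquad d_s^2(u,M_s) := \inf_{h\in M_s}\|A_{2s}^{1/2}(u-h)\|_2^2,
\]
Theorem~\ref{sta of so} gives $D(s,u)\ge\tfrac{s}{n}\beta_{s,n}d_s^2(u,M_s)$ for every $s\in(0,\min\{1,n/6\})$, and both sides vanish at $s=0$ (since $A_0=I$ and $\mathcal{S}_{0,n}=1$). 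Because Beckner \cite{Be1992,Be1997} obtained \eqref{L-S ine} from \eqref{eq:sobsphere} by exactly this derivative procedure, the analogous operation on the stability bound is expected to deliver Theorem~\ref{sta of lg}.

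First I would compute $\lim_{s\to 0^+}D(s,u)/s$. Using that $A_{2s}$ acts on the $l$-th spherical-harmonic subspace by multiplication by $\Gamma(l+\tfrac n2+s)/\Gamma(l+\tfrac n2-s)$, the identity $\Gamma'/\Gamma=\psi$, and the chain rule for $\|u\|_{p(s)}^2$ with $p(s)=2n/(n-2s)$, a direct first-order Taylor expansion produces
\[
\lim_{s\to 0^+}\frac{D(s,u)}{s} = 2\sum_{l\ge 1}\bigl(\psi(l+\tfrac n2)-\psi(\tfrac n2)\bigr)\|u_l\|_2^2 - \frac{1}{n}\int_{\mathbb{S}^n}|u|^2\ln\frac{|u|^2|\mathbb{S}^n|^2}{\|u\|_2^2}\,d\omega,
\]
which coincides (up to an explicit positive constant $K_n$) with $LS(u)$; this identification is precisely the content of Beckner's derivation of \eqref{L-S ine}, so I can quote the identity $\lim_{s\to 0^+}D(s,u)/s = K_n LS(u)$.

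Next I would control the right-hand side. The Remark after Theorem~\ref{sta of so} yields $\beta_n^{*}:=\lim_{s\to 0^+}\beta_{s,n}>0$ from the explicit formula \eqref{beta}. The remaining task is to show $\lim_{s\to 0^+}d_s^2(u,M_s)=d^2(u,M)$. The upper bound is easy: for each $h\in M$ there is a natural $h^{(s)}\in M_s$ with the same parameters $(c,\xi)$, and $\|A_{2s}^{1/2}(u-h^{(s)})\|_2^2\to\|u-h\|_2^2$ by dominated convergence, so $\limsup_{s\to 0^+}d_s^2(u,M_s)\le d^2(u,M)$. For the matching lower bound, pick a near-minimizer $h_s\in M_s$ with parameters $(c_s,\xi_s)$; the triangle inequality together with $\|A_{2s}^{1/2}u\|_2\to\|u\|_2$ bounds $\|A_{2s}^{1/2}h_s\|_2$ uniformly, and the Sobolev equality on $M_s$, namely $\|A_{2s}^{1/2}h_s\|_2^2=\mathcal{S}_{s,n}c_s^2|\mathbb{S}^n|^{2/p(s)}$, forces $|c_s|$ to stay bounded. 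Extract a subsequence with $(c_s,\xi_s)\to(c_*,\xi_*)\in\mathbb{R}\times\overline{B^{n+1}}$; in the non-degenerate case $|\xi_*|<1$ the corresponding $h_s$ converge uniformly to $h_*\in M$ and dominated convergence delivers $\|A_{2s}^{1/2}(u-h_s)\|_2^2\to\|u-h_*\|_2^2\ge d^2(u,M)$.

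Combining the two limits and dividing Theorem~\ref{sta of so} by $s$ yields $K_n LS(u)\ge(\beta_n^{*}/n)\,d^2(u,M)$, i.e.\ Theorem~\ref{sta of lg} with $\alpha_n:=\beta_n^{*}/(nK_n)$; a routine density argument extends the conclusion from smooth $u$ to all of $\mathcal{D_L}$. The principal obstacle is the possible degeneration $|\xi_*|=1$ in the lower-bound step for the distance: the parameterization of $M_s$ by $\mathbb{R}\times B^{n+1}$ is not compact, and one must rule out that a near-minimizer concentrates into a boundary bubble. I expect this to be handled by contrasting the asymptotic $\|h_{c,\xi}^{(s)}\|_{L^2}^2\sim c^2(1-|\xi|)^s$ (which tends to $0$) against the Sobolev-identity value $\|A_{2s}^{1/2}h_{c,\xi}^{(s)}\|_2^2=\mathcal{S}_{s,n}c^2|\mathbb{S}^n|^{2/p(s)}$ (bounded away from $0$) in the concentration limit, forcing $\|A_{2s}^{1/2}(u-h_s)\|_2^2$ to stay above $d^2(u,M)$ even in this degenerate regime.
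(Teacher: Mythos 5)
Your proposal follows essentially the same route as the paper's proof: divide the inequality of Theorem~\ref{sta of so} by $s$, identify $\lim_{s\to0^+}D(s,u)/s$ with an explicit multiple of $LS(u)$ through the spectral/Funk--Hecke expansion (this is the paper's Lemma~\ref{relation of lg so} combined with the derivative of $\mathcal{S}_{s,n}\|u\|_{2^*_s}^2$), and show that the distances to $M_s$ do not drop below $d^2(u,M)$ in the limit (the paper's Lemma~\ref{s goes 0}), the only delicate point being the degeneration $|\xi_s|\to1$. The paper differs only cosmetically: it first converts the $\dot H^s$ distance into an $L^{2^*_s}$ distance via the Sobolev inequality and then passes to the limit, whereas you work with the $\dot H^s$ distance directly; since $A_0=I$ these are interchangeable here.

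The one genuine soft spot is precisely the step you flag and leave as an expectation. The heuristic $\|h^{(s)}_{c,\xi}\|_{L^2}^2\sim c^2(1-|\xi|)^{s}$ does not give convergence to $0$ in the relevant regime: in the joint limit $s\to0$, $|\xi_s|\to1$ the quantity $(1-|\xi_s|)^{s}$ is indeterminate (take $1-|\xi_s|=s$, so that $(1-|\xi_s|)^{s}=s^{s}\to1$; indeed at $s=0$ the bubble has constant $L^2$ mass $c^2|\mathbb{S}^n|$ by conformal invariance of the Jacobian, so the $L^2$ mass of a near-minimizing bubble genuinely need not vanish). Your fallback, contrasting a vanishing $L^2$ norm with a non-vanishing $\dot H^s$ norm, therefore does not apply as stated. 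What does close the argument is weak rather than norm convergence: when $|\xi_s|\to1$ the bubbles $h_s$ are bounded in $L^2$ and tend to $0$ a.e.\ off a single point, hence $h_s\rightharpoonup0$ weakly in $L^2$, the cross term $\langle u,h_s\rangle$ vanishes, and
\begin{equation*}
\liminf_{s\to0}\|u-h_s\|_2^2\;\ge\;\|u\|_2^2+\liminf_{s\to0}\|h_s\|_2^2\;\ge\;\|u\|_2^2\;\ge\;d^2(u,M),
\end{equation*}
since $0\in M$. With that repair (and the routine density step from smooth functions to all of $\mathcal{D_L}$, which both you and the paper defer), your argument matches the paper's.
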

\vskip0.5cm
\begin{remark}
In fact from the proof of  Theorem \ref{sta of lg}, we can choose $\alpha_n=\frac{\pi^{\frac{n}{2}}\delta_0\varepsilon_0}{2n\Gamma(\frac{n}{2})(1+\delta_0)}$, where $\varepsilon_0\in (0,1/12)$ is a constant and $\delta_0$ is dependent only on $\varepsilon_0$ by (\ref{delta0}). On the other hand, through Theorem B, we see that $$\inf_{u\in \mathcal{D_L}}\frac{LS(u)}{d^2(u,M)}\leq \frac{8\pi^{n/2}}{\Gamma(n/2)(n+2)}.$$ Hence, we deduce that the lower bound for the stability of log-Sobolev inequality obtained in Theorem \ref{sta of lg} is optimal when $n\rightarrow +\infty$.
\end{remark}

This paper is organized as follows. Section 2 is devoted to a local version of stability for fractional Sobolev inequalities with the optimal asymptotic lower bound. In Section 3, we will give the stability of fractional Sobolev inequalities with the optimal asymptotic lower bound. In Section 4, we will establish the stability of the log-Sobolev inequality with the explicit lower bound. In Section 5, we add the proof of an auxiliary lemma there.
\medskip

{\bf Acknowledgement.} The authors wish to thank R. Frank for his comments on our earlier version  which have helped us to improve the clarity of the exposition of the paper.

\section{A local version of stability for fractional Sobolev inequalities with the optimal asymptotic lower bound}
In this section we will set up a local stability of fractional Sobolev inequalites with the optimal asymptotic lower bound when $0<s<\min\{1,n/6\}$ for positive functions. The idea of the proof essentially comes from \cite{DEFFL}, where Dolbeault,
Esteban, Figalli, Frank and Loss obtained the optimal asymptotic lower bound for the local stability of first-order Sobolev inequality. We first prove the following lemma.
\begin{lemma}\label{local stability}
For any $0<\varepsilon_0<1/12$, there exist an explicit $\delta\in(0,1)$ such that for all $0<s<\min\{1,n/6\}$ and all $0 \leq u\in H^{s}(\mathbb{S}^n)$ with
\begin{align}\label{distance}
\inf_{h\in M_s}\|A_{2s}^{1/2}(u-h)\|_2^2\leq \delta\|A_{2s}^{1/2}u\|_2^2,
\end{align}
there holds
$$\left\| A_{2s}^{1/2} u \right\|_2^2-\mathcal S_{s,n} \|u\|_{\frac{2n}{n-2s}}^2\geq \frac{4s}{n-2s} \varepsilon_0\inf_{h\in M_s}\|A_{2s}^{1/2}(u-h)\|_2^2.$$
\end{lemma}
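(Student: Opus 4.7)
The plan is to adapt the strategy of Dolbeault-Esteban-Figalli-Frank-Loss, originally for $s=1$ on $\mathbb{R}^n$, to the fractional setting on the sphere. First I would pick a minimizer $h_* \in M_s$ of $h \mapsto \|A_{2s}^{1/2}(u-h)\|_2$; existence is standard since $M_s$ is a smooth open manifold parametrized by $(c,\xi) \in \mathbb{R}\times B^{n+1}$, and the closeness hypothesis \eqref{distance} prevents the minimizing sequence from escaping to the boundary $|\xi|\to 1$. Using the conformal invariance of the Sobolev functional under the M\"obius group on $\mathbb{S}^n$, apply a conformal transformation to $u$ so that $h_*$ becomes a positive constant $c$; both the deficit and the distance to $M_s$ are preserved. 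Writing $u = c + r$, the first-order optimality conditions for the minimization force $r$ to be orthogonal, in the $A_{2s}$ inner product, to the tangent space $T_{h_*}M_s$; by inspection of \eqref{opt set}, this tangent space is spanned by the constant function and the first-order spherical harmonics $\{Y_{1,m}\}$, so the spherical-harmonic expansion satisfies $r = \sum_{\ell \geq 2} r_\ell$.

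Next I would Taylor-expand the deficit $\delta(u) := \|A_{2s}^{1/2}u\|_2^2 - \mathcal{S}_{s,n}\|u\|_{2n/(n-2s)}^2$ around the critical point $u = c$. The linear term vanishes by the Euler-Lagrange equations, and a direct computation using $\int r = 0$ together with \eqref{eq:opas} gives the quadratic form
$$Q(r) = \|A_{2s}^{1/2}r\|_2^2 - \mu_{1,s}\,\|r\|_2^2,\qquad \mu_{\ell,s} := \frac{\Gamma(\ell + n/2 + s)}{\Gamma(\ell + n/2 - s)}.$$
Diagonalizing in spherical harmonics yields $Q(r) = \sum_{\ell \geq 2}(\mu_{\ell,s} - \mu_{1,s})\|r_\ell\|_2^2$, and since $\mu_{\ell,s}$ is increasing in $\ell$, the infimum of $Q(r)/\|A_{2s}^{1/2}r\|_2^2$ is attained at $\ell = 2$:
$$Q(r) \geq \Bigl(1 - \frac{\mu_{1,s}}{\mu_{2,s}}\Bigr)\|A_{2s}^{1/2}r\|_2^2 = \frac{4s}{n+2+2s}\|A_{2s}^{1/2}r\|_2^2.$$
Under the hypothesis $s < n/6$ one checks $(n-2s)/(n+2+2s) > 1/12 > \varepsilon_0$, so $\frac{4s}{n+2+2s} > \frac{4s}{n-2s}\,\varepsilon_0$ with a positive, dimension-free margin available for absorbing higher-order terms.

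The main technical obstacle is controlling the non-polynomial remainder in the Taylor expansion of $\|c+r\|_p^2$ with $p = 2n/(n-2s) \in (2,4)$. For this I would use the elementary pointwise estimate
$$\Bigl|(c+r)^p - c^p - pc^{p-1}r - \binom{p}{2}c^{p-2}r^2\Bigr| \leq C_p\bigl(c^{p-3}|r|^3 \wedge |r|^p\bigr),$$
with $C_p$ uniform for $p \in (2,4)$; here the non-negativity $u \geq 0$ keeps $(c+r)^p$ well-defined without absolute values. Integrating, and using the sharp fractional Sobolev embedding $\|r\|_p \leq \mathcal{S}_{s,n}^{-1/2}\|A_{2s}^{1/2}r\|_2$ together with H\"older's inequality, converts this into a remainder bound of the form $K\,\delta^{\alpha}\|A_{2s}^{1/2}r\|_2^2$, where $\alpha > 0$ and $K$ are independent of $s$ and $n$. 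The restriction $s < n/6$ plays a dual role: it keeps $p$ bounded above by $4$ (ensuring the cubic/higher-order estimates stay uniform), and it guarantees that the spectral gap $\frac{4s}{n+2+2s}$ exceeds the target $\frac{4s}{n-2s}\varepsilon_0$ by a fixed relative margin. Choosing an explicit $\delta = \delta(\varepsilon_0) \in (0,1)$ small enough that $K\delta^{\alpha} \leq \frac{4s}{n+2+2s} - \frac{4s}{n-2s}\varepsilon_0$ then closes the argument.
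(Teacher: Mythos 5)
Your setup matches the paper's: conformal normalization so that the nearest optimizer is a constant, orthogonality of $r$ to spherical harmonics of degree $0$ and $1$, and the spectral gap computation giving $Q(r)\geq\frac{4s}{n+2+2s}\|A_{2s}^{1/2}r\|_2^2$ on the orthogonal complement (your identity $(q-1)\mu_{0,s}=\mu_{1,s}$ is correct). The gap is in the final absorption step, and it is fatal to the argument as written. Your remainder bound has the form $K\delta^{\alpha}\|A_{2s}^{1/2}r\|_2^2$ with $K,\alpha$ independent of $s$ and $n$, and you propose to choose $\delta$ so that $K\delta^{\alpha}\leq\frac{4s}{n+2+2s}-\frac{4s}{n-2s}\varepsilon_0$. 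But the right-hand side is of order $s/n$: the \emph{relative} margin between the spectral gap and the target constant is dimension-free, but the \emph{absolute} margin into which you must absorb the remainder tends to $0$ as $n\to\infty$ or $s\to 0$. Your choice therefore forces $\delta\lesssim (s/n)^{1/\alpha}$, i.e.\ $\delta$ degenerates with $s$ and $n$, whereas the lemma requires a single $\delta\in(0,1)$ valid for all $0<s<\min\{1,n/6\}$ and all $n$. This is exactly the obstruction that makes the result nontrivial: a crude cubic estimate such as $\int|r|^3\lesssim\|r\|_{2^*_s}\|r\|_2^2\lesssim\delta^{1/2}\|A_{2s}^{1/2}r\|_2^2$ produces a remainder with no factor of $s/n$, and there is no room to hide it.

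What the paper (following Dolbeault--Esteban--Figalli--Frank--Loss) does instead is decompose $r$ by level sets, $r_1=\min\{r,\gamma\}$, $r_2=\min\{(r-\gamma)_+,M-\gamma\}$, $r_3=(r-M)_+$, and use the pointwise inequality of Lemma \ref{q-estimate}, in which the coefficient of $r_1^2$ is $\frac12 q(q-1)+2\gamma\theta$ with $\theta=q-2=\frac{4s}{n-2s}$. The crucial feature is that the excess over the pure second-order coefficient is \emph{proportional to $\theta$} times a freely chosen small parameter $\gamma$, so it can be absorbed into the $O(s/n)$ spectral margin with $\gamma$ (hence $\delta_0$) independent of $s$ and $n$; the pieces $r_2$ and $r_3$ are then handled separately via smallness of their supports, projections onto high-degree harmonics, and the Sobolev inequality (the terms $I_1,I_2,I_3$). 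A corresponding splitting of the Gagliardo seminorm (Lemma \ref{lem split}) is also needed. Without some mechanism that makes the error terms themselves carry a factor of $\theta$, your Taylor-expansion route cannot yield a $\delta$ uniform in $s$ and $n$.
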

We need to point out here that the constant $\delta=\frac{\delta_0}{1+\delta_0}$, and $\delta_0$ comes from Lemma~\ref{local sta}. Instead of directly proving Lemma~\ref{local stability}, we will prove Lemma~\ref{local sta}. In fact, let $0\leq u\in H^{s}(\mathbb{S}^n)$ with
$$\inf_{h\in M_S}\|A_{2s}^{1/2}(u-h)\|_2^2\leq \delta\|A_{2s}^{1/2}u\|_2^2.$$
It is well known (see~\cite{ChFrWe}) that the infimum is attained. By conformal and scaling invariance of the stability of Sobolev inequality, we may assume $u=1+r\geq 0$, and $r$ is orthogonal with spherical harmonics with degree $0$ and $1$.
Then (\ref{distance}) is equivalent to
$$\|A_{2s}^{1/2}r\|_2^2\leq \frac{\delta_0}{1+\delta_0}\|A_{2s}^{1/2}(1+r)\|_2^2= \frac{\delta_0}{1+\delta_0}\left(\frac{\Gamma(n/2+s)}{\Gamma(n/2-s)}|\mathbb{S}^n|+\|A_{2s}^{1/2}r\|_2^2\right).$$
That is
$$\|A_{2s}^{1/2}r\|_2^2\leq \frac{\Gamma(n/2+s)}{\Gamma(n/2-s)}|\mathbb{S}^n|\delta_0,$$
which implies
$$\|r\|_{\frac{2n}{n-2s}}^2\leq |\mathbb{S}^n|^{1-\frac{2s}{n}}\delta_0$$
by the Sobolev inequality.
\vskip0.1cm

On the other hand, from \cite{Be2015} (see Lemma10 and Theorem 11 in \cite{Be2015}), we know that
$$\|A_{2s}^{1/2}u\|_2^2=|\mathbb{S}^n|\frac{\Gamma(n/2+s)}{\Gamma(n/2-s)}\left(B_{n,s}\int_{\mathbb{S}^n}\int_{\mathbb{S}^n}\frac{|u(\xi)-u(\eta)|^2}{|\xi-\eta|^{n+2s}}d\sigma_\xi d\sigma_\eta+\int_{\mathbb{S}^n}|u|^2d\sigma_\xi\right),$$
and
$$\mathcal{S}_{n,s}\|u\|_{\frac{2n}{n-2s}}^2=|\mathbb{S}^n|\frac{\Gamma(n/2+s)}{\Gamma(n/2-s)}\left(\int_{\mathbb{S}^n}|u|^{\frac{2n}{n-2s}}d\sigma_\xi\right)^{\frac{n-2s}{n}},$$
where $d\sigma_\xi$ denote the uniform probability measure on $\mathbb{S}^n$ and $B_{n,s}=2^{n+2s}\frac{s\Gamma(\frac{n}{2}+1)\Gamma(\frac{n}{2}-s)}{\Gamma(n+1)\Gamma(1-s)}$.
 So the following Lemma will imply Lemma \ref{local stability}.

\begin{lemma}\label{local sta}
For any $0<\varepsilon_0<1/12$, there exist a $\delta_0\in(0,1)$ such that for all $0<s<\min\{1,n/6\}$ and all $-1\leq r\in H^{s}(\mathbb{S}^n)$ satisfying
$$\left(\int_{\mathbb{S}^n}|r|^{\frac{2n}{n-2s}}d\sigma_\xi\right)^{\frac{n-2s}{n}}\leq \delta_0~~~\textit{and}~~~\int_{\mathbb{S}^n}r d\sigma_\xi=0=\int_{\mathbb{S}^n}w_jrd\sigma_\xi, ~~j=1,\cdots,n+1.$$
There holds
\begin{align*}
&B_{n,s}\int_{\mathbb{S}^n}\int_{\mathbb{S}^n}\frac{|r(\xi)-r(\eta)|^2}{|\xi-\eta|^{n+2s}}d\sigma_\xi d\sigma_\eta+ \int_{\mathbb{S}^n}|1+r|^2d\sigma_\xi-\left(\int_{\mathbb{S}^n}(1+r)^{\frac{2n}{n-2s}}\,d\sigma_\xi\right)^{\frac{n-2s}{n}}\\
&\ \ \geq \frac{4s}{n-2s} \varepsilon_0\big(B_{n,s}\int_{\mathbb{S}^n}\int_{\mathbb{S}^n}\frac{|r(\xi)-r(\eta)|^2}{|\xi-\eta|^{n+2s}}d\sigma_\xi
 d\sigma_\eta+\int_{\mathbb{S}^n}r^2d\sigma_\xi\big) .
\end{align*}
\end{lemma}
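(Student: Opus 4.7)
The plan is to follow the approach of Dolbeault--Esteban--Figalli--Frank--Loss adapted to the fractional operator $A_{2s}$ on $\mathbb{S}^n$ via the Beckner representation. Write $p = \tfrac{2n}{n-2s}$, $q = \tfrac{2}{p} = \tfrac{n-2s}{n}$, so $p-2 = \tfrac{4s}{n-2s}$, and abbreviate $\mathcal{E}(r) := B_{n,s}\iint_{\mathbb{S}^n\times\mathbb{S}^n} |r(\xi)-r(\eta)|^2 |\xi-\eta|^{-n-2s}\, d\sigma_\xi d\sigma_\eta$. The inequality to prove then reads $\mathcal{E}(r) + \int(1+r)^2\,d\sigma_\xi - (\int(1+r)^p\,d\sigma_\xi)^{q} \geq \tfrac{4s}{n-2s}\varepsilon_0 (\mathcal{E}(r) + \int r^2\,d\sigma_\xi)$.

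First I would use $\int r\,d\sigma_\xi = 0$ to rewrite $\int(1+r)^2\,d\sigma_\xi = 1 + \int r^2\,d\sigma_\xi$, then Taylor-expand. Setting $\phi_p(r) = (1+r)^p - 1 - pr - \binom{p}{2}r^2$ and $G(r) = \int\phi_p(r)\,d\sigma_\xi$, one has $\int(1+r)^p\,d\sigma_\xi = 1 + \binom{p}{2}\int r^2\,d\sigma_\xi + G(r)$. Applying the concavity inequality $(1+A)^q\leq 1+qA$ for $q\in(0,1)$ and using the identity $q\binom{p}{2}=p-1$, I obtain the lower bound
\[
\int(1+r)^2\,d\sigma_\xi - \Bigl(\int(1+r)^p\,d\sigma_\xi\Bigr)^q \;\geq\; -\tfrac{4s}{n-2s}\int r^2\,d\sigma_\xi \;-\; qG(r).
\]
Thus the whole matter is reduced to showing that $\mathcal{E}(r)$ is large enough to absorb the leading subtraction $\tfrac{4s}{n-2s}\int r^2$ together with both the target RHS and the remainder $qG(r)$.

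For the first part, the orthogonality of $r$ to spherical harmonics of degrees $0$ and $1$ supplies a spectral gap: since the eigenvalue of $A_{2s}$ on degree-$l$ harmonics equals $\Gamma(l+n/2+s)/\Gamma(l+n/2-s)$, the Beckner formula yields $\mathcal{E}(r) \geq \tfrac{2s(n+1)}{(n/2+1-s)(n/2-s)}\int r^2\,d\sigma_\xi = \tfrac{2(n+1)}{n+2-2s}\cdot\tfrac{4s}{n-2s}\int r^2\,d\sigma_\xi$. Since $\tfrac{2(n+1)}{n+2-2s}\geq 1+\mu$ for an explicit $\mu>0$ uniform on $0<s<\min\{1,n/6\}$, $n>2s$, this furnishes a margin of at least $\mu\cdot\tfrac{4s}{n-2s}\int r^2\,d\sigma_\xi$ above the subtraction; it is exactly this margin that must absorb the $\varepsilon_0$-term on the right-hand side as well as the remainder $qG(r)$.

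For the remainder estimate, I would exploit $\phi_p\equiv 0$ at $p=2$ via the representation $\phi_p(r) = \int_2^p \partial_\tau\phi_\tau(r)\,d\tau$ to derive a pointwise bound of the form $|\phi_p(r)|\leq C(p-2)(|r|^3 + (1+|r|)^p)$ for $r\geq -1$, with $C$ uniform for $p\in(2,3]$. The condition $s<n/6$ gives $p\geq 3$ on the probability-measure sphere, so $\|r\|_3\leq\|r\|_p$; together with the Sobolev embedding $\|r\|_p^2\leq\mathcal{E}(r)+\int r^2\,d\sigma_\xi$ read off the Beckner form of the Sobolev inequality, and the smallness $\|r\|_p^2\leq\delta_0$, Hölder delivers $|qG(r)|\leq C_1(p-2)\sqrt{\delta_0}(\mathcal{E}(r)+\int r^2\,d\sigma_\xi)$. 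Choosing $\delta_0=\delta_0(\varepsilon_0)$ so that $C_1\sqrt{\delta_0}\leq \mu-\varepsilon_0$ then closes the argument. The main obstacle is producing this pointwise bound with the \emph{prefactor} $(p-2)$, uniformly in $p$ and in $|r|$: without this prefactor the remainder would swamp the leading correction as $s\to 0$ or $n\to\infty$, destroying the asymptotic optimality. Handling the large-$|r|$ regime (where $\phi_p$ scales like $(1+|r|)^p$ rather than $|r|^3$) requires using an identity such as $r^{p-2}-1=(p-2)\int_0^1 r^{(p-2)\tau}\log r\,d\tau$ to extract the $(p-2)$ factor and then absorbing the resulting $L^p\log L$-type quantity via $\|r\|_p\leq\delta_0^{1/2}$ and the inequality $p\geq 3$ coming from $s<n/6$.
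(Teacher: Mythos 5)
Your leading-order bookkeeping is correct: the concavity step $(1+A)^{2/p}\le 1+\tfrac2pA$ together with $\tfrac2p\binom{p}{2}=p-1$ reduces the claim to absorbing $\tfrac{4s}{n-2s}\int r^2\,d\sigma_\xi$ plus the Taylor remainder into $\mathcal{E}(r)$, and your spectral-gap computation $A_{n,s}(2)-1=\tfrac{4s}{n-2s}\cdot\tfrac{2(n+1)}{n+2-2s}$ is exactly right and supplies a uniform margin $\mu\ge 1/3$. This matches the skeleton of the paper's argument. But the entire difficulty of the lemma sits in the remainder estimate, and the route you propose for it does not work. First, a sign error: on the probability sphere $\|r\|_3\le\|r\|_p$ requires $p\ge 3$, whereas $s<n/6$ gives $p=\tfrac{2n}{n-2s}<3$, so the inequality goes the wrong way. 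More fundamentally, $\int|r|^3\,d\sigma_\xi$ is \emph{not} controlled by $\mathcal{E}(r)+\int r^2\,d\sigma_\xi$ in the regime $n>6s$: a concentrating bubble $U_\lambda$ has bounded energy and small $L^p$ norm while $\int U_\lambda^3\sim\lambda^{3(n-2s)/2-n}\to\infty$ precisely when $n>6s$. Second, the $(1+|r|)^p$ piece of your remainder yields at best $\|r\,1_{\{r\ \mathrm{large}\}}\|_p^{p}\le\delta_0^{(p-2)/2}\|r\|_p^2$, and $\delta_0^{(p-2)/2}\to 1$ as $p\to 2$, so this term does \emph{not} carry the $(p-2)$ prefactor you need; it would swamp the margin $\mu\cdot\tfrac{4s}{n-2s}$ for large $n$.

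The paper circumvents both obstacles by never producing a cubic term at all. It truncates $r=r_1+r_2+r_3$ at two levels $\gamma$ and $M$, uses the DEFFL pointwise inequality (Lemma \ref{q-estimate}), whose right-hand side contains only quadratic expressions in $r_1,r_2,r_3$ plus a single $r_3^q$ term, and proves that the fractional Dirichlet form is superadditive under this truncation (Lemma \ref{lem split}, proved in Section 5). The splitting is essential: it allows the sharp Sobolev inequality to be applied to $r_3$ \emph{alone}, so that $\tfrac2q(1+\varepsilon_2\theta)\big(\int r_3^q\big)^{2/q}$ is absorbed with the exact margin $1-\tfrac2q=\tfrac{2s}{n}$, which is where the $O(s/n)$ prefactor really comes from; meanwhile the low spherical-harmonic modes of $r_1$ and $r_2$ are controlled through the smallness of the superlevel sets $\{r\ge\gamma\}$, $\{r\ge M\}$ rather than through any $L^3$ bound. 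Without these ingredients your absorption step cannot be closed.
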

\begin{remark}
It is important to point out that the constant $\delta_0$ is independent of $s$ and $n$, see (\ref{delta0}). And the reason why we choose to prove the local stability of this form is that the double integral will bring us benefit when we split the function $r$ and do the orthogonal analyis (see Lemma \ref{lem split}).
\end{remark}

Now, let us prove Lemma \ref{local sta}. For any $r\geq -1$, define $r_1$, $r_2$ and $r_3$ by
$$r_1=\min\{r, \gamma\},\ \ r_2=\min\{(r-\gamma)_{+}, M-\gamma\},\ \ r_3=(r-M)_{+},$$
where $\gamma$ and $M$ are two parameters such that $0<\gamma<M$. We need the following lemma.
\begin{lemma}\label{q-estimate}(\cite{DEFFL})
Given $\varepsilon>0$, $M>0$, and $\gamma\in (0, \frac{M}{2})$. There exists a constant $C_{\gamma,\varepsilon,M}$ such that for any
$r\geq -1$ and $2\leq q\leq 3$ and $\theta=q-2$, there holds
\begin{equation}\begin{split}
(1+r)^q-1-qr&\leq (\frac{1}{2}q(q-1)+2\gamma\theta)r_1^2+(\frac{1}{2}q(q-1)+C_{\gamma,\varepsilon,M}\theta)r_2^2\\
&\ \ +2r_1r_2+2(r_1+r_2)r_2+(1+\varepsilon\theta)r_3^q.
\end{split}\end{equation}
\end{lemma}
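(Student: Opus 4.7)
My plan is to prove the pointwise inequality by splitting $r \in [-1,\infty)$ into the three regions $[-1,\gamma]$, $[\gamma,M]$, $[M,\infty)$ -- in which $r$ is exhausted by $r_1$, by $r_1+r_2$, and by $r_1+r_2+r_3$ respectively -- and then gluing three local estimates. The organizing observation is that at $\theta=0$ (i.e.\ $q=2$) the claimed inequality collapses to the algebraic identity
\[r^2 \;=\; r_1^2 + r_2^2 + r_3^2 + 2 r_1 r_2 + 2(r_1+r_2) r_3,\]
so the task reduces to tracking the $O(\theta)$ perturbation on each region.

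On the small region $r \in [-1,\gamma]$ (so $r=r_1$, $r_2=r_3=0$), write $(1+r)^q - 1 - qr = \binom{q}{2} r^2 + \sum_{k \geq 3} \binom{q}{k} r^k$ via the binomial series (convergent for $|r|<1$, with $r=-1$ handled by direct inspection since the two sides are $q-1$ and $\tfrac{q(q-1)}{2}+2\gamma\theta$). The estimate $|\binom{q}{k}| \leq q(q-1)\theta/[k(k-1)(k-2)]$ for $k \geq 3$, obtained from $|(q-3)(q-4)\cdots(q-k+1)| \leq 1\cdot 2 \cdots (k-3)$ when $\theta \in [0,1]$, factors a $\theta$ out of every higher-order term. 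For $r \geq 0$ the tail is controlled by $q(q-1)\theta \cdot r^3/[6(1-r)] \leq 2\gamma\theta\, r^2$ provided $\gamma \leq 1/2$; for $r \in [-1,0]$ a sign chase ($\binom{q}{k}$ alternates in sign for $k \geq 3$, $r^k$ alternates in sign in $k$) shows $\binom{q}{k} r^k \leq 0$ for all $k \geq 3$, so no correction is needed at all.

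On the intermediate region $r = \gamma + r_2 \in [\gamma,M]$, decompose
\[(1+r)^q - 1 - qr = [(1+\gamma)^q - 1 - q\gamma] + \bigl[(1+\gamma+r_2)^q - (1+\gamma)^q - q\, r_2\bigr],\]
bound the first bracket by Step 1 at $r = \gamma$, and Taylor-expand the second around $r_2 = 0$ to get $q[(1+\gamma)^{q-1}-1]\, r_2 + \tfrac{q(q-1)}{2}(1+\xi)^\theta r_2^2$ for some $\xi \in (\gamma,\gamma+r_2)$. The linear coefficient equals the desired $2\gamma$ at $q=2$ and overshoots it by an $O(\gamma\theta)$ amount for $q > 2$; reroute this excess into the $r_2^2$ slot via Young's inequality $\theta r_2 \leq \eta\theta\, r_2^2 + \theta/(4\eta)$, choosing $\eta$ in terms of $\gamma, M$, and absorb the residual constant into the $2\gamma^3\theta$ slack produced by Step 1. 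The factor $(1+\xi)^\theta - 1 \leq C_M \theta$ adds another $O(\theta)$ perturbation to the $r_2^2$ coefficient, which is folded into $C_{\gamma,\varepsilon,M}$.

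On the large region $r = M + r_3$ with $r_3 \geq 0$, build on Step 2 at $r = M$ and invoke the Young-type interpolation $(a+b)^q \leq (1+\varepsilon\theta)\, b^q + C_{\varepsilon,\theta,M}\, a^q$ (a direct consequence of the convex estimate $(1+x)^q \leq 1+\varepsilon\theta + C_{\varepsilon,\theta}\, x^q$) with $a = 1+M$ and $b = r_3$; this delivers precisely the sharp prefactor $(1+\varepsilon\theta)$ on $r_3^q$, while the residual linear $q\, r_3$ and the additive constants absorb into $2 M r_3$ plus the slack already accumulated. The hardest step is the intermediate bookkeeping: the cross-term coefficient on $r_2$ produced by Taylor matches $2\gamma$ only at $q=2$, so rerouting the $O(\theta)$ overshoot without disturbing the sharp $\tfrac{q(q-1)}{2}$ on $r_2^2$ -- which is indispensable for recovering equality at $\theta = 0$ -- forces $C_{\gamma,\varepsilon,M}$ to diverge as $\gamma \to 0$ or $M \to \gamma$, and the parallel demand that the $r_3^q$ coefficient read exactly $1+\varepsilon\theta$ (rather than a generic $1+O(\theta)$) is what ultimately introduces the $\varepsilon$-dependence.
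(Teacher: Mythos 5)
First, a reading note: the displayed inequality in the lemma contains a typo ($2(r_1+r_2)r_2$ should be $2(r_1+r_2)r_3$, as is clear from how the lemma is applied in \eqref{est of L2} and \eqref{est of Lq}, and from the requirement that the inequality be an identity at $q=2$); you correctly work with the intended version. The paper itself does not prove this lemma (it is quoted from \cite{DEFFL}), so I assess your argument on its own terms. Your region decomposition is the right frame, and Steps 1--2 are essentially workable: the sign analysis of $\binom{q}{k}r^k$ for $r\in[-1,0]$, the tail bound $|\binom{q}{k}|\le q(q-1)\theta/[k(k-1)(k-2)]$, and the Taylor-plus-Young rerouting on $[\gamma,M]$ all go through, modulo two caveats. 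The binomial-series argument requires $\gamma\le 1/2$ (and does not even converge for $\gamma\ge1$), whereas the lemma is stated for all $\gamma\in(0,M/2)$; a Taylor/second-derivative argument ($f''(r)\le q(q-1)\theta r-4\gamma\theta$, then integrate twice) covers all $\gamma$ and also delivers the slack you need. And that slack is $\theta\gamma^3$, not $2\theta\gamma^3$: the tail at $r=\gamma$ consumes up to $\theta\gamma^3$ of the $2\theta\gamma^3$ allowance, so you cannot simultaneously ``bound the first bracket by Step 1'' and reuse the full $2\gamma^3\theta$.

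The genuine gap is Step 3. The interpolation $(a+b)^q\le(1+\varepsilon\theta)b^q+C_{\varepsilon,\theta,M}\,a^q$ is false at $\theta=0$ (it would force $2ab\le (C-1)a^2$ for all $b$), and for $\theta>0$ it forces $C_{\varepsilon,\theta,M}\gtrsim(\varepsilon\theta)^{-(q-1)}$; in any case $C_{\varepsilon,\theta,M}\ge 2^q-1-\varepsilon\theta\ge 3-\varepsilon$. The additive constant $C_{\varepsilon,\theta,M}(1+M)^q$ it produces must be absorbed into the total slack available on the region $r\ge M$, which is only $M^2+2Mr_3+O_{\gamma,\varepsilon,M}(\theta)$ beyond the exact identity at $q=2$ --- in particular, letting $r_3\to0^+$ one would need $C_{\varepsilon,\theta,M}(1+M)^q\le 1+qM+M^2+O(\theta)$, which contradicts $C_{\varepsilon,\theta,M}\ge 3$. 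The problem is that your splitting discards the cross term $\sim q(1+M)r_3^{q-1}$ into a $\theta$-blowing-up constant, when in fact that cross term is exactly what must match $2Mr_3$ to leading order. The correct treatment must keep it: writing $\psi(r_3)=(1+M+r_3)^q-(1+M)^q-(q+2M)r_3-(1+\varepsilon\theta)r_3^q$, one has $\psi(0)=0$, $\psi'(0)=O_M(\theta)$, and $\psi'(r_3)\le\theta\bigl[C_M(1+r_3)^\theta\ln(2+r_3)-c\,\varepsilon\,r_3^{1+\theta}\bigr]$ after extracting the $\theta=0$ cancellation; integrating the positive part of the bracket (supported on a set of size $O_{\varepsilon,M}(1)$) gives $\sup\psi\le C_{\varepsilon,M}\theta$, which is then absorbed into $C_{\gamma,\varepsilon,M}\theta(M-\gamma)^2$. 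This derivative comparison, not a crude Young splitting, is where the $\varepsilon$-dependence genuinely enters.
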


\subsection{Dividing the deficit}
First we split the double integral part of the deficit in the following lemma and the proof of the lemma will be proved in Section 5.
\begin{lemma}\label{lem split}
There holds
\begin{equation}\begin{split}
&\int_{\mathbb{S}^n}\int_{\mathbb{S}^n}\frac{|r(\xi)-r(\eta)|^2}{|\xi-\eta|^{n+2s}}d\sigma_\xi d\sigma_\eta
\geq \sum_{i=1}^{3}\int_{\mathbb{S}^n}\int_{\mathbb{S}^n}\frac{|r_i(\xi)-r_i(\eta)|^2}{|\xi-\eta|^{n+2s}}d\sigma_\xi d\sigma_\eta.
\end{split}\end{equation}
\end{lemma}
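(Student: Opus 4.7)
The plan is to reduce the claimed inequality to a pointwise inequality and then integrate against the non-negative kernel $|\xi-\eta|^{-(n+2s)}$. First I would verify the simple but crucial algebraic fact that $r = r_1 + r_2 + r_3$ as a pointwise identity on $\mathbb{S}^n$. This is immediate by checking the four regimes dictated by the thresholds $\gamma$ and $M$: in each of $r < 0$, $0 \leq r < \gamma$, $\gamma \leq r < M$, and $r \geq M$, the three truncations sum back to $r$.

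The heart of the argument is the observation that each truncation $r_i$ is obtained from $r$ by applying a non-decreasing (in fact $1$-Lipschitz) function $\phi_i: \mathbb{R} \to \mathbb{R}$, namely $\phi_1(t) = \min(t,\gamma)$, $\phi_2(t) = \min((t-\gamma)_+, M-\gamma)$, $\phi_3(t) = (t-M)_+$. Consequently, for any two points $\xi, \eta \in \mathbb{S}^n$, if $r(\xi) \geq r(\eta)$ then $r_i(\xi) \geq r_i(\eta)$ simultaneously for $i=1,2,3$. Thus the three differences $r_i(\xi) - r_i(\eta)$ all share a common sign (or vanish), so every cross product $(r_i(\xi)-r_i(\eta))(r_j(\xi)-r_j(\eta))$ is non-negative.

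Expanding the square using $r = r_1 + r_2 + r_3$ gives
\begin{equation*}
|r(\xi)-r(\eta)|^2 = \sum_{i=1}^{3}|r_i(\xi)-r_i(\eta)|^2 + 2\sum_{1 \leq i<j \leq 3}(r_i(\xi)-r_i(\eta))(r_j(\xi)-r_j(\eta)),
\end{equation*}
and by the previous step the cross terms are pointwise $\geq 0$. Dropping them yields the pointwise bound
\begin{equation*}
|r(\xi)-r(\eta)|^2 \;\geq\; \sum_{i=1}^{3}|r_i(\xi)-r_i(\eta)|^2.
\end{equation*}
Multiplying by the non-negative weight $|\xi-\eta|^{-(n+2s)}$ and integrating over $\mathbb{S}^n \times \mathbb{S}^n$ delivers the lemma.

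There is no real obstacle here; the only subtlety is the monotonicity/sign alignment of the three truncations, which is the reason the particular choice of decomposition via $r_1, r_2, r_3$ was made. I note that the argument does not use any special feature of the sphere, the exponent $n+2s$, or the sign of $r$ beyond $r \geq -1$; it would work on any measure space with any non-negative symmetric kernel, and this robustness is what justifies the slogan that the double-integral form of the $H^s$ semi-norm is particularly convenient for the orthogonal splitting that drives the rest of Section 2.
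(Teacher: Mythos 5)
Your proof is correct and is essentially the paper's argument: the paper likewise writes $r=r_1+r_2+r_3$, expands the square, and reduces to the pointwise non-negativity of the cross terms, though it proceeds in two stages (first splitting off $r_3$ from $\widetilde r=r_1+r_2$, then splitting $\widetilde r$ into $r_1+r_2$) and checks the sign by an explicit case analysis over the regions $\{r<M\}$, $\{r\ge M\}$ (resp.\ $\{r<\gamma\}$, $\{r\ge\gamma\}$) rather than via your cleaner observation that each $r_i=\phi_i(r)$ with $\phi_i$ non-decreasing. Nothing is missing.
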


Then let us handle the the terms $\int_{\mathbb{S}^{n}}(1+r)^{2}d\sigma_{\xi}$ and $\left(\int_{\mathbb{S}^{n}}(1+r)^{q}\,d\sigma_{\xi}\right)^{2/q}$. Since $r$ has mean zero,
direct computation gives that
\begin{align}\label{est of L2}\nonumber
 \int_{\mathbb{S}^{n}}(1+r)^{2}d\sigma_{\xi}&=1 + \int_{\mathbb{S}^{n}}r^{2}d\sigma_{\xi}\\\nonumber
&= 1+\int_{\mathbb{S}^{n}}r_1^{2}\,d\sigma_{\xi} + \int_{\mathbb{S}^{n}}r_2^{2}\,d\sigma_{\xi} + \int_{\mathbb{S}^{n}}r_3^{2}\,d\sigma_{\xi}\\
&\ \  + 2\int_{\mathbb{S}^{n}}r_1r_2\,d\sigma_{\xi} + 2\int_{\mathbb{S}^{n}}(r_1+r_2)r_3\,d\sigma_{\xi}.
\end{align}

Denote $q=\frac{2n}{n-2s}$ and $\theta=q-2=\frac{4s}{n-2s}$, then $2\leq q\leq 3$ and $0<\theta<1$ since $n>6s$. Given two parameters $\varepsilon_1, \varepsilon_2>0$, we apply Lemma \ref{q-estimate} with (an arbitrary choice of $M\geq 2\gamma$)
$$\gamma=\frac{\varepsilon_1}{2}, \varepsilon=\varepsilon_2,\ \ C_{\gamma,\varepsilon,M}=C_{\varepsilon_1, \varepsilon_2},$$
$(1+x)^{\frac{2}{q}}\leq 1+\frac{2}{q}x$ and $\int_{\mathbb{S}^n}r d\sigma_\xi=0$
to derive that
\begin{align}\label{est of Lq}\nonumber
&\left(\int_{\mathbb{S}^{n}}(1+r)^{q}\,d\sigma_{\xi}\right)^{2/q}\\\nonumber
 &\ \ \leq 1 + (q-1+\frac{2}{q}\epsilon_1\theta)\int_{\mathbb{S}^{n}}r_1^{2}\,d\sigma_{\xi} + (q-1+\frac{2}{q}C_{\epsilon_1,\epsilon_2}\theta)\int_{\mathbb{S}^{n}}r_2^{2}\,d\sigma_{\xi}  \\\nonumber
 &\ \ \ \ + \frac{4}{q}\int_{\mathbb{S}^{n}}(r_1r_2)\,d\sigma_{\xi} + \frac{4}{q}\int_{\mathbb{S}^{n}}(r_1+r_2)r_3\,d\sigma_{\xi} + \frac{2}{q}(1+\epsilon_2\theta)\int_{\mathbb{S}^{n}}r_3^{q}\,d\sigma_{\xi}
\\&\ \ \leq 1 + (q-1+\epsilon_1\theta)\int_{\mathbb{S}^{n}}r_1^{2}\,d\sigma_{\xi} + (q-1+C_{\epsilon_1,\epsilon_2}\theta)\int_{\mathbb{S}^{n}}r_2^{2}\,d\sigma_{\xi}  \\\nonumber
&\ \ \ \ \ + 2\int_{\mathbb{S}^{n}}r_1r_2\,d\sigma_{\xi} + 2\int_{\mathbb{S}^{n}}(r_1+r_2)r_3\,d\sigma_{\xi} + \frac{2}{q}(1+\epsilon_2\theta)\int_{\mathbb{S}^{n}}r_3^{q}\,d\sigma_{\xi}.
\end{align}
Combining Lemma \ref{lem split}, (\ref{est of L2}) and (\ref{est of Lq}), we can divide the deficit as follows,
\begin{footnotesize}
\begin{align*}
&B_{n,s}\int_{\mathbb{S}^n}\int_{\mathbb{S}^n}\frac{|r(\xi)-r(\eta)|^2}{|\xi-\eta|^{n+2s}}d\sigma_\xi d\sigma_\eta+ \int_{\mathbb{S}^n}|1+r|^2d\sigma_\xi-(\int_{\mathbb{S}^{n}}(1+r)^{q}\,d\sigma_\xi)^{2/q}\\
&\ \ \geq \theta \varepsilon_0\left(B_{n,s}\int_{\mathbb{S}^n}\int_{\mathbb{S}^n}\frac{|r(\xi)-r(\eta)|^2}{|\xi-\eta|^{n+2s}}d\sigma_\xi
 d\sigma_\eta+\int_{\mathbb{S}^n}r^2d\sigma_\xi\right)\\
 &\ \ \ \ +\left(1-\theta\varepsilon_0)\big(B_{n,s}\int_{\mathbb{S}^n}\int_{\mathbb{S}^n}\frac{|r_1(\xi)-r_1(\eta)|^2}{|\xi-\eta|^{n+2s}}d\sigma_\xi
 d\sigma_\eta+\int_{\mathbb{S}^n}r_1^2d\sigma_\xi\right)+(1-q-\varepsilon_1\theta)\int_{\mathbb{S}^n}r_1^2d\sigma_\xi\\
 &\ \ \ \  +(1-\theta\varepsilon_0)\big(B_{n,s}\int_{\mathbb{S}^n}\int_{\mathbb{S}^n}\frac{|r_2(\xi)-r_2(\eta)|^2}{|\xi-\eta|^{n+2s}}d\sigma_\xi
 d\sigma_\eta+\int_{\mathbb{S}^n}r_2^2d\sigma_\xi\big)+(1-q-C_{\varepsilon_1,\varepsilon_2}\theta)\int_{\mathbb{S}^n}r_2^2d\sigma_\xi\\
&\ \ \ \ +(1-\theta\varepsilon_0)\big(B_{n,s}\int_{\mathbb{S}^n}\int_{\mathbb{S}^n}\frac{|r_3(\xi)-r_3(\eta)|^2}{|\xi-\eta|^{n+2s}}d\sigma_\xi
 d\sigma_\eta+\int_{\mathbb{S}^n}r_3^2d\sigma_\xi\big)-\frac{2}{q}(1+\varepsilon_2\theta)\int_{\mathbb{S}^n}r_3^qd\sigma_\xi.\\
\end{align*}
\end{footnotesize}
Let us define
\begin{align*}
I_1:&=(1-\theta\varepsilon_0)\big(B_{n,s}\int_{\mathbb{S}^n}\int_{\mathbb{S}^n}\frac{|r_1(\xi)-r_1(\eta)|^2}{|\xi-\eta|^{n+2s}}d\sigma_\xi
 d\sigma_\eta+\int_{\mathbb{S}^n}r_1^2d\sigma_\xi\big)\\
 &\ \ +(1-q-\varepsilon_1\theta)\int_{\mathbb{S}^n}r_1^2d\sigma_\xi +\sigma_0\theta\int_{\mathbb{S}^{n}}(r_2^{2}+r_3^{2})d\sigma_\xi,
\\I_2&:=(1-\theta\varepsilon_0)\big(B_{n,s}\int_{\mathbb{S}^n}\int_{\mathbb{S}^n}\frac{|r_2(\xi)-r_2(\eta)|^2}{|\xi-\eta|^{n+2s}}d\sigma_\xi
 d\sigma_\eta+\int_{\mathbb{S}^n}r_2^2d\sigma_\xi\big)\\
&\ \ +(1-q-C_{\varepsilon_1, \varepsilon_2}\theta-\sigma_0\theta)\int_{\mathbb{S}^n}r_2^2d\sigma_\xi
\\I_3&:=(1-\theta\varepsilon_0)\big(B_{n,s}\int_{\mathbb{S}^n}\int_{\mathbb{S}^n}\frac{|r_3(\xi)-r_3(\eta)|^2}{|\xi-\eta|^{n+2s}}d\sigma_\xi,
 d\sigma_\eta+\int_{\mathbb{S}^n}r_3^2d\sigma_\xi\big)\\
 &\ \ -\frac{2}{q}(1+\varepsilon_2\theta)\int_{\mathbb{S}^n}r_3^qd\sigma_\xi-\sigma_0\theta\int_{\mathbb{S}^n}r_3^2d\sigma_\xi,
\end{align*}
where the parameter $\sigma_0 > 0$ will be determined later. To summarize, we have
\begin{align*}
&B_{n,s}\int_{\mathbb{S}^n}\int_{\mathbb{S}^n}\frac{|r(\xi)-r(\eta)|^2}{|\xi-\eta|^{n+2s}}d\sigma_\xi d\sigma_\eta+ \int_{\mathbb{S}^n}|1+r|^2d\sigma_\xi-(\int_{\mathbb{S}^{n}}(1+r)^{q}\,d\sigma_\xi)^{2/q}\\
&\ \ \geq \theta \varepsilon_0\big(B_{n,s}\int_{\mathbb{S}^n}\int_{\mathbb{S}^n}\frac{|r(\xi)-r(\eta)|^2}{|\xi-\eta|^{n+2s}}d\sigma_\xi
 d\sigma_\eta+\int_{\mathbb{S}^n}r^2d\sigma_\xi\big) + \sum\limits_{k=1}^{3}I_k.
\end{align*}

In the following, we will show that $I_3$, $I_1$ and $I_2$ are nonnegative respectively. More precisely, we will prove that
for any fixed $0<\varepsilon_0<1/12$, and
\begin{align}\label{paremeters}
\varepsilon_1=\frac 1 2(\frac{1}{3}-4\varepsilon_0),~~~\varepsilon_2=\frac{1-4\varepsilon_0}{8},~~~\sigma_0=\frac{2}{q}\varepsilon_2,
\end{align}
there exists a constant $\delta_0\in(0,1)$, independent of $s$ and $n$, such that for  all $-1\leq r\in H^{s}(\mathbb{S}^n)$ satisfying
$$\left(\int_{\mathbb{S}^n}|u|^{\frac{2n}{n-2s}}d\sigma_\xi\right)^{\frac{n-2s}{n}}\leq \delta_0~~~\textit{and}~~~\int_{\mathbb{S}^n}r d\sigma_\xi=0=\int_{\mathbb{S}^n}w_jrd\sigma_\xi, ~~j=1,\cdots,n+1,$$
there holds $I_i\geq 0$, $i=1,2,3$.

\subsection{Bound on $I_3$}
Let us prove if we choose $\varepsilon_2$ and $\sigma_0$ as in (\ref{paremeters}), then
\begin{equation*}\begin{split}
& I_3=(1-\theta\varepsilon_0)\big(B_{n,s}\int_{\mathbb{S}^n}\int_{\mathbb{S}^n}\frac{|r_3(\xi)-r_3(\eta)|^2}{|\xi-\eta|^{n+2s}}d\sigma_\xi
 d\sigma_\eta+\int_{\mathbb{S}^n}r_3^2d\sigma_\xi\big)\\
 &\ \ -\frac{2}{q}(1+\varepsilon_2\theta)\int_{\mathbb{S}^n}r_3^qd\sigma_\xi-\sigma_0\theta\int_{\mathbb{S}^n}r_3^2d\sigma_\xi\geq 0.
\end{split}\end{equation*}
Recalling $\sigma_0=\frac{2}{q}\varepsilon_2$ and using the fact $\|r_3\|_{L^q(\mathbb{S}^n)}^q\leq \|r_3\|_{L^q(\mathbb{S}^n)}^2$, $ \|r_3\|_{L^2(\mathbb{S}^n)}\leq \|r_3\|_{L^q(\mathbb{S}^n)}$ and the Sobolev inequality, we can write
\begin{equation*}\begin{split}
I_3&\geq\left((1-\theta\varepsilon_0)\big(B_{n,s}\int_{\mathbb{S}^n}\int_{\mathbb{S}^n}\frac{|r_3(\xi)-r_3(\eta)|^2}{|\xi-\eta|^{n+2s}}d\sigma_\xi
 d\sigma_\eta+\int_{\mathbb{S}^n}r_3^2d\sigma_\xi\big)\right)\\
 &\ \ -\frac{2}{q}(1+2\varepsilon_2\theta)\big(\int_{\mathbb{S}^n}r_3^qd\sigma_\xi\big)^{\frac{2}{q}}\\
&\ \ \geq (1-\theta\varepsilon_0)\big(\int_{\mathbb{S}^n}r_3^qd\sigma_\xi\big)^{\frac{2}{q}}-\frac{2}{q}(1+2\varepsilon_2\theta)\big(\int_{\mathbb{S}^n}r_3^qd\sigma_\xi\big)^{\frac{2}{q}}\\
&=\left(1-\varepsilon_0 \frac{4s}{n-2s}-(1-\frac{2s}{n})(1+2\varepsilon_2\frac{4s}{n-2s})\right)\big(\int_{\mathbb{S}^n}r_3^qd\sigma_\xi\big)^{\frac{2}{q}}.
\end{split}\end{equation*}
Since $1-\varepsilon_0 \frac{4s}{n-2s}-(1-\frac{2s}{n})(1+2\varepsilon_2\frac{4s}{n-2s})\geq \frac{s(n-4s)}{n(n-2s)}>0$ when $4\varepsilon_0+8\varepsilon_2\leq 1$ and
$n>6s$. Then $I_3$ is nonnegative.

\subsection{Bound on $I_1$}
In this subsection, we will prove $I_1\geq 0$. Let $\tilde{Y}_{l,m}$ be the $L^{2}$-normalized spherical harmonics with respect to the uniform probability measure on the sphere for any $m=1,2,...,N(n,l)$, where
$$N(n,0)=1~~\text{and}~~N(n,l)=\frac{(2l+n-1)\Gamma(l+n-1)}{\Gamma(l+1)\Gamma(n)}.$$
 Let $\widetilde{r_1}$ be the orthogonal projection of $r_1$ onto the space of spherical harmonics of degree $\geq 2$, that is,
\begin{align*}
\widetilde{r_1}=r_1 - \int_{\mathbb{S}^{n}}r_1d\sigma_\xi - \sum_{j=1}^{n+1}\big(\tilde{Y}_{1,j}\int_{\mathbb{S}^{n}}r_1\tilde{Y}_{1,m}d\sigma_\xi\big).
\end{align*}
By the orthogonality of spherical harmonics, we have
\begin{align}\label{sph of r_1}
\int_{\mathbb{S}^n}r_1^2d\sigma_\xi=\int_{\mathbb{S}^n}\widetilde{r_1}^2d\sigma_\xi+(\int_{\mathbb{S}^n}r_1d\sigma_\xi)^2+\sum_{j=1}^{n+1}\big(\int_{\mathbb{S}^{n}}r_1\tilde{Y}_{1,m}d\sigma_\xi\big)^2.
\end{align}
Now we need the following Aronszajn-Smith formula by  Beckner (Lemma 10 in~\cite{Be2015}).
\begin{lemma}\label{A-S formula}
Let $F=\sum_{l,m} c_{l,m}\tilde{Y}_{l,m}$, $0<s<\min\{1,n/2\}$, then
$$B_{n,s}\iint_{\mathbb{S}^n\times \mathbb{S}^n}\frac{|F(\xi)-F(\eta)|^2}{|\xi-\eta|^{n+2s}}d\sigma_\xi d\sigma_\eta=\sum_{l=1}^{\infty}\left[A_{n,s}(l)-1\right]\sum_{m=1}^{N(n,l)}c_{l,m}^2,$$
where $B_{n,s}=2^{n+2s}\frac{s\Gamma(\frac{n}{2}+1)\Gamma(\frac{n}{2}-s)}{\Gamma(n+1)\Gamma(1-s)}$ and $A_{n,s}(l)=\frac{\Gamma(\frac{n}{2}-s)\Gamma(\frac{n}{2}+s+l)}{\Gamma(\frac{n}{2}+s)\Gamma(\frac{n}{2}-s+l)}$.
\end{lemma}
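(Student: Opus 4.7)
The plan is to diagonalize both sides of the identity in the basis of spherical harmonics $\{\tilde Y_{l,m}\}$ and match the eigenvalues on each degree-$l$ subspace. Because the kernel $|\xi-\eta|^{-(n+2s)}$ depends only on $\xi\cdot\eta$, the bilinear form
$$b(F,G):=\iint_{\mathbb{S}^n\times\mathbb{S}^n}\frac{(F(\xi)-F(\eta))(G(\xi)-G(\eta))}{|\xi-\eta|^{n+2s}}d\sigma_\xi d\sigma_\eta$$
is $SO(n+1)$-invariant. By Schur's lemma applied to the irreducible rotation action on each degree-$l$ harmonic subspace, one has $b(\tilde Y_{l,m},\tilde Y_{l',m'})=q_l\,\delta_{ll'}\delta_{mm'}$ for scalars $q_l\geq 0$, and the claim reduces to verifying $B_{n,s}q_l=A_{n,s}(l)-1$ for every $l\geq 0$ (the case $l=0$ being automatic since $\tilde Y_{0,m}$ is constant).

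To compute $q_l$, I would apply the Funk--Hecke theorem to the zonal kernel $(2(1-t))^{-(n+2s)/2}$, initially in the region $\operatorname{Re}(s)<0$ where everything converges absolutely, and then continue meromorphically in $s$. The associated convolution operator has spectrum given by the Gegenbauer integral
$$\mu_l(s)=\frac{\omega_{n-1}}{\omega_n}\int_{-1}^{1}(2(1-t))^{-(n+2s)/2}\frac{C_l^{(n-1)/2}(t)}{C_l^{(n-1)/2}(1)}(1-t^2)^{(n-2)/2}dt.$$
Expanding $|F(\xi)-F(\eta)|^2=|F(\xi)|^2+|F(\eta)|^2-2F(\xi)F(\eta)$ and pairing with this spectral decomposition (justified again by analytic continuation, since the individual pieces diverge for $s>0$) yields $q_l=2\mu_0(s)-2\mu_l(s)$. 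Using Rodrigues' formula for the Gegenbauer polynomials and integrating by parts $l$ times, the integral collapses to a single beta function whose explicit evaluation produces the clean ratio $\mu_l(s)/\mu_0(s)=A_{n,s}(l)$; hence $q_l=-2\mu_0(s)\bigl[A_{n,s}(l)-1\bigr]$.

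The last step is to identify $-2\mu_0(s)=B_{n,s}^{-1}$. The zeroth eigenvalue is the meromorphic continuation of
$$\mu_0(s)=\frac{\omega_{n-1}}{\omega_n}\int_{-1}^{1}(2(1-t))^{-(n+2s)/2}(1-t^2)^{(n-2)/2}dt,$$
which after the factorization $(1-t^2)^{(n-2)/2}=(1-t)^{(n-2)/2}(1+t)^{(n-2)/2}$ becomes a beta integral. Combining its $\Gamma$-function evaluation with $\omega_n=2\pi^{(n+1)/2}/\Gamma((n+1)/2)$, the reflection identity $\Gamma(-s)=-\Gamma(1-s)/s$, and the Legendre duplication formula to convert $\Gamma((n+1)/2)$-factors into $\Gamma(n/2+1)$- and $\Gamma(n+1)$-factors, one arrives at precisely
$$-2\mu_0(s)=\frac{\Gamma(n+1)\Gamma(1-s)}{2^{n+2s}\,s\,\Gamma(n/2+1)\Gamma(n/2-s)}=B_{n,s}^{-1}.$$
The main obstacle is this last $\Gamma$-function bookkeeping: tracking powers of $2$ and the reflection/duplication identities through the analytic continuation in $s$ is delicate but mechanical, and follows Beckner's original treatment in \cite{Be2015}.
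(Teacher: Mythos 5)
The paper does not prove this lemma at all: it is quoted directly as Lemma 10 of Beckner \cite{Be2015}, so there is no internal argument to compare yours against. Your outline is nonetheless a correct reconstruction of the standard (essentially Beckner's) proof, and the two computations on which it hinges both check out. Rotation invariance and Schur's lemma reduce the identity to the diagonal eigenvalues $q_l=2\left(\mu_0(s)-\mu_l(s)\right)$; the Funk--Hecke evaluation of the zonal kernel $(2(1-t))^{-(n+2s)/2}$ (the same identity, with $s$ replaced by $-s$, that the paper invokes from \cite{FrLi2012} in the proof of Lemma \ref{relation of lg so}) gives $\mu_l(s)/\mu_0(s)=\Gamma(\tfrac n2-s)\Gamma(\tfrac n2+s+l)/\big(\Gamma(\tfrac n2+s)\Gamma(\tfrac n2-s+l)\big)=A_{n,s}(l)$; and your normalization is right, since $\mu_0(s)=\frac{2^{-2s}\pi^{n/2}\Gamma(-s)}{|\mathbb{S}^n|\,\Gamma(\frac n2-s)}$ combined with $\Gamma(-s)=-\Gamma(1-s)/s$ and the duplication identity $\Gamma(\tfrac{n+1}{2})\Gamma(\tfrac n2+1)=2^{-n}\sqrt{\pi}\,\Gamma(n+1)$ yields exactly $-2\mu_0(s)=\frac{\Gamma(n+1)\Gamma(1-s)}{2^{n+2s}s\,\Gamma(\frac n2+1)\Gamma(\frac n2-s)}=B_{n,s}^{-1}$. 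The one step you should make explicit is the analytic continuation: for $0<s<1$ the separate quantities $\mu_0(s)$ and $\mu_l(s)$ are divergent integrals, so the clean route is to take $F$ a finite linear combination of spherical harmonics, note that $\iint|F(\xi)-F(\eta)|^2|\xi-\eta|^{-(n+2s)}d\sigma_\xi d\sigma_\eta$ is holomorphic on $\{\operatorname{Re}s<1\}$ (the quadratic vanishing of $|F(\xi)-F(\eta)|^2$ on the diagonal beats the singularity precisely because $s<1$, which is where the hypothesis $s<\min\{1,n/2\}$ enters), match it with the right-hand side on $\{\operatorname{Re}s<0\}$ where the splitting is legitimate, and then pass to general $F$ by monotonicity in each degree. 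With that caveat spelled out, the proposal is sound.
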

Then using (\ref{sph of r_1}), Aronszajn-Smith formula and the fact that $A_{n,s}(l)$ is increasing on $l$, we can estimate $I_1$ as follows,

\begin{align*}
I_1&=(1-\theta\varepsilon_0)B_{n,s}\int_{\mathbb{S}^n}\int_{\mathbb{S}^n}\frac{|r_1(\xi)-r_1(\eta)|^2}{|\xi-\eta|^{n+2s}}d\sigma_\xi+(2-q-\varepsilon_1\theta-\varepsilon_0\theta)\int_{\mathbb{S}^n}r_1^2d\sigma_\xi\\
 &\ \  +\sigma_0\theta\int_{\mathbb{S}^{n}}({r_2}^{2}+{r_3}^{2})d\sigma_\xi\\
&\geq (1-\theta\varepsilon_0)(A_{n,s}(2)-1)\int_{\mathbb{S}^n}\widetilde{r_1}^2d\sigma_\xi+(1-\theta\varepsilon_0)(A_{n,s}(1)-1)\sum_{j=1}^{n+1}\big(\int_{\mathbb{S}^n}(r_1\tilde{Y}_{1,j})^2d\sigma_\xi\big)\\
&\ \ -(\frac{4s}{n-2s}+(\varepsilon_1+\varepsilon_0)\frac{4s}{n-2s})\int_{\mathbb{S}^n}\widetilde{r_1}^2d\sigma_\xi-(\frac{4s}{n-2s}+(\varepsilon_1+\varepsilon_0)\frac{4s}{n-2s})(\int_{\mathbb{S}^n}r_1d\sigma_\xi)^2\\
&\ \ -(\frac{4s}{n-2s}+(\varepsilon_1+\varepsilon_0)\frac{4s}{n-2s})\sum_{j=1}^{n+1}\big(\int_{\mathbb{S}^{n}}r_1\tilde{Y}_{1,j}d\sigma_\xi\big)^2+\frac{4s}{n-2s}\sigma_0\int_{\mathbb{S}^{n}}({r_2}^{2}+{r_3}^{2})d\sigma_\xi,
\end{align*}
where $$A_{n,s}(1)=\frac{\Gamma(\frac{n}{2}-s)\Gamma(\frac{n}{2}+s+1)}{\Gamma(\frac{n}{2}+s)\Gamma(\frac{n}{2}-s+1)}=1+\frac{4s}{n-2s}$$
and $$A_{n,s}(2)=\frac{\Gamma(\frac{n}{2}-s)\Gamma(\frac{n}{2}+s+2)}{\Gamma(\frac{n}{2}+s)\Gamma(\frac{n}{2}-s+2)}=(1+\frac{4s}{n-2s})\cdot(1+\frac{4s}{n-2s+2}).$$
Since
$$(1-\theta\varepsilon_0)(A_{n,s}(2)-1)-\frac{4s}{n-2s}(1+\varepsilon_0+\varepsilon_1)\geq \frac{4s}{n-2s}(\frac{1}{3}-4\varepsilon_0-\varepsilon_1)>0$$
when $n\geq 6s$, then we have
\small{\begin{equation}\label{I_1}\begin{split}
&I_1(\frac{4s}{n-2s})^{-1}\\
&\ \ \geq (\frac{1}{3}-4\varepsilon_0-\varepsilon_1)\int_{\mathbb{S}^n}\widetilde{r_1}^2d\sigma_\xi-(1+\varepsilon_0+\varepsilon_1)
(\int_{\mathbb{S}^n}r_1d\sigma_\xi)^2-(1+\varepsilon_1+\varepsilon_0)\sum_{j=1}^{n+1}\big(\int_{\mathbb{S}^{n}}r_1\tilde{Y}_{1,j}d\sigma_\xi\big)^2\\
&\ \ \ \ \ +\sigma_0\int_{\mathbb{S}^{n}}({r_2}^{2}+{r_3}^{2})d\sigma_\xi\\
&\ \ =(\frac{1}{3}-4\varepsilon_0-\varepsilon_1)\int_{\mathbb{S}^n}r_1^2d\sigma_\xi
-(\frac{4}{3}-3\varepsilon_0)(\int_{\mathbb{S}^n}r_1d\sigma_\xi)^2-(\frac{4}{3}-3\varepsilon_0)\sum_{j=1}^{n+1}\big(\int_{\mathbb{S}^{n}}r_1\tilde{Y}_{1,j}d\sigma_\xi\big)^2\\
&\ \ \ \ \ +\sigma_0\int_{\mathbb{S}^{n}}(r_2^{2}+r_3^{2})d\sigma_\xi.\\
\end{split}\end{equation}}

Next we will show that $I_1 \geq 0$.
Let $Y$ be one of the functions 1 and $\sum_{j=1}^{n+1}a_j\tilde{Y}_{1,j}$, where $a_j \in \mathbb{R}$. Using
the fact $$\int_{\mathbb{S}^{n}}Yrd\sigma_\xi=0,$$
we can get
\begin{equation}\begin{split}\label{adint1}
\left(\int_{\mathbb{S}^{n}}Yr_1\,d\sigma_\xi\right)^{2}& = \left(\int_{\mathbb{S}^{n}}Y(r_2+r_3)\,d\sigma_\xi \right)^{2}\\
 &\leq \|Y\|^{2}_{L^{4}(\mathbb{S}^{n})}\left(\int_{\{r_2+r_3>0\}}d\sigma_\xi\right)^{\frac{1}{2}} \|r_2+r_3\|^{2}_{L^{2}(\mathbb{S}^{n})}.
\end{split}\end{equation}
Since $\{r_2+r_3>0\} \subset \{r_1 \geq \gamma\}$, we have
\begin{equation}\label{adint2}\begin{split}
\int_{\{r_2+r_3>0\}}d\sigma_\xi \leq \int_{\{r_1\geq \gamma\}}d\sigma_\xi \leq \frac{1}{\gamma^{2}}\int_{\mathbb{S}^{n}}r_1^{2}\,d\sigma_\xi = \frac{1}{\gamma^{2}}\|r_1\|^{2}_{L^{2}(\mathbb{S}^{n})}.
\end{split}\end{equation}
By \eqref{adint1} and \eqref{adint2}, we derive that
\begin{align*}
\left(\int_{\mathbb{S}^{n}}Yr_1\,d\sigma_\xi\right)^{2} \leq \|Y\|^{2}_{L^{4}(\mathbb{S}^{d})}\frac{1}{\gamma}\|r_1\|_{L^{2}(\mathbb{S}^{d})} \|r_2+r_3\|^{2}_{L^{2}(\mathbb{S}^{n})}.
\end{align*}
Since $$\|r_2+r_3\|^{2}_{L^{2}(\mathbb{S}^{n})} \leq 2\int_{\mathbb{S}^{n}}\left(r_2^{2}+r_3^{2}\right)d\sigma_\xi\leq 2 \|r\|_{L^2(\mathbb{S}^n)}^2$$
and $$ \|r\|_{L^2(\mathbb{S}^n)}^2\leq \|r\|_{L^q(\mathbb{S}^n)}^2\leq \delta_0.$$
Combining the above estimates, we get that
\begin{equation}\label{int1}
\left(\int_{\mathbb{S}^{n}}Yr_1\,d\sigma_\xi\right)^{2}\leq \|Y\|^{2}_{L^{4}(\mathbb{S}^{n})}\frac{\sqrt{2\delta_0}}{\gamma} \|r_1\|_{L^{2}(\mathbb{S}^{n})}\left(\int_{\mathbb{S}^{n}}\left(r_2^{2}+r_3^{2}\right)d\sigma_\xi\right)^{\frac{1}{2}}.
\end{equation}
If $Y = 1$, \eqref{int1} directly gives that
\begin{align}\label{adint3}
\left(\int_{\mathbb{S}^{n}}r_1\,d\sigma_\xi\right)^{2} \leq \frac{\sqrt{2\delta_0}}{\gamma}\|r_1\|_{L^{2}(\mathbb{S}^{n})}\left(\int_{\mathbb{S}^{n}}({r_2}^{2}+{r_3}^{2})d\sigma_\xi\right)^{1/2}.
\end{align}
If $Y = \sum_{j=1}^{n+1}a_j\tilde{Y}_{1,j}$, then a quick computation gives
\begin{align*}
\|Y\|^{4}_{L^{4}(\mathbb{S}^{n})}= (n+1)^2\frac{\int_0^\pi\cos^{4}\theta\sin^{n-1}\theta d\theta}{\int_0^\pi\sin^{n-1}\theta d\theta}(\sum_{j=1}^{n+1}a_j^2)^{2} = \frac{3(\sum_{j=1}^{n+1}a_j^2)^{2}(n+1)^2}{(n+3)(n+1)} \leq 3(\sum_{j=1}^{n+1}a_j^2)^{2}.
\end{align*}
Pick $a_j=\int_{\mathbb{S}^{n}}  \tilde{Y}_{1,j}r_1d\sigma_\xi$, then it follows that
\begin{equation}\label{adint4}\begin{split}
\sum_{j=1}^{n+1}\big(\int_{\mathbb{S}^{n}}r_1\tilde{Y}_{1,j}d\sigma_\xi\big)^2&= (\sum_{j=1}^{n+1}a_j^2)^{-1}(\int_{\mathbb{S}^{n}}Yr_1\,d\sigma_\xi)^{2}
 \leq\sqrt{3}\frac{\sqrt{2\delta_0}}{\gamma}\|r_1\|_{L^{2}(\mathbb{S}^{n})}(\int_{\mathbb{S}^{n}}r_2^{2}+r_3^{2}\,d\sigma_\xi)^{1/2}.
\end{split}\end{equation}
Gathering \eqref{I_1}, \eqref{adint3} and \eqref{adint4}, we conclude that
\small{\begin{equation}\begin{split} I_1
&\geq (\frac{1}{3}-4\varepsilon_0-\varepsilon_1)\int_{\mathbb{S}^n}r_1^2d\sigma_\xi+\sigma_0\int_{\mathbb{S}^{n}}(r_2^{2}+r_3^{2})d\sigma_\xi\\
&\ \ -(\frac{4}{3}-3\varepsilon_0)\frac{\sqrt{2\delta_0}}{\gamma}\|r_1\|_{L^{2}(\mathbb{S}^{n})}\left(\int_{\mathbb{S}^{n}}r_2^{2}+r_3^{2}d\sigma_\xi\right)^{1/2}\\
&\ \ -(\frac{4}{3}-3\varepsilon_0)\sqrt{3}\frac{\sqrt{2\delta_0}}{\gamma}\|r_1\|_{L^{2}(\mathbb{S}^{n})}\left(\int_{\mathbb{S}^{n}}r_2^{2}+r_3^{2}d\sigma_\xi\right)^{1/2}.
\end{split}\end{equation}}
Then by the Cauchy-Swartz inequality, $I_1$ is nonnegative if
\begin{align}\label{ine for I_1}
2\sqrt{1/3-4\varepsilon_0-\varepsilon_1}\sqrt{\sigma_0}\geq (1+\sqrt{3})(4/3-\varepsilon_0)\frac{\sqrt{2\delta_0}}{\gamma}.
\end{align}
Recalling that $\sigma_0=\frac{2}{q}\varepsilon_2$ and $\gamma=\frac {\varepsilon_1}{2}$, then we have
 $$\frac{2}{q}\varepsilon_2\geq \left((1+\sqrt{3})(4/3-\varepsilon_0)\right)^2\frac{2\delta_0}{(1/3-4\varepsilon_0-\varepsilon_1)\varepsilon_1^2}.$$
Denote
\begin{align}\label{delta1}
\delta_1:= \frac{(1/3-4\varepsilon_0-\varepsilon_1)\varepsilon_1^2\varepsilon_2}{3\left((1+\sqrt{3})(4/3-\varepsilon_0)\right)^2}.
\end{align}
Let us choose $\delta_0\leq \delta_1$, then
$$\delta_0\leq \delta_1\leq \frac{(1/3-4\varepsilon_0-\varepsilon_1)\varepsilon_1^2\varepsilon_2}{q\left((1+\sqrt{3})(4/3-\varepsilon_0)\right)^2},$$
which will make inequality (\ref{ine for I_1}) hold. And it is obvious that $\delta_1$ is independent of $s$ and $n$.

\subsection{Bound on $I_2$}
In this subsection, we will prove $I_2\geq 0$. From \cite{DEFFL}(see inequality (25) in \cite{DEFFL}), we know that for any $L^{2}$-normalized spherical harmonic $Y$ of degree $k \in \mathbb{N}$
\begin{align}
\int_{\mathbb{S}^{n}}Yr_2\,d\sigma_{\xi} \leq 3^{\frac{k}{2}}\gamma^{-\frac{q}{4}}\delta_0^{\frac{q}{8}}\|r_2\|_{L^{2}(\mathbb{S}^{n})}.
\end{align}
Let us denote by $\pi_{k}r_2$ the projection of $r_2$ onto spherical harmonics of degree $k$, and let $Y=\frac{\pi_{k}r_2}{\|\pi_{k}r_2\|_2}$, then there holds
\begin{align*}
\|\pi_kr_2\|_{L^{2}(\mathbb{S}^{n})} \leq 3^{\frac{k}{2}}\gamma^{-\frac{q}{4}}\delta_0^{\frac{q}{8}}\|r_2\|_{L^{2}(\mathbb{S}^{n})}.
\end{align*}
For any $K \in \mathbb{N}$, we denote by $\Pi_Kr_2:= \sum_{k<K}\pi_{k}r_2$ the projection of $r_2$ onto spherical harmonics of degree less than $K$, then we can derive that
\begin{equation}\begin{split}\label{int2}
\|\Pi_Kr_2\|_{L^{2}(\mathbb{S}^{n})}&= (\sum_{k<K}\|\pi_kr_2\|^{2}_{L^{2}(\mathbb{S}^{n})})^{1/2}\\
 &\leq \gamma^{-\frac{q}{4}}\delta_0^{\frac{q}{8}}\|r_2\|_{L^{2}(\mathbb{S}^{n})}\sqrt{\sum_{k<K}3^{k}}\\
  &\leq 3^{\frac{K}{2}}\gamma^{-\frac{q}{4}}\delta_0^{\frac{q}{8}}\|r_2\|_{L^{2}(\mathbb{S}^{n})}.
\end{split}\end{equation}
By Lemma \ref{A-S formula}, we can write
\begin{align*}
B_{n,s}\int_{\mathbb{S}^n}\int_{\mathbb{S}^n}\frac{|r_2(\xi)-r_2(\eta)|^2}{|\xi-\eta|^{n+2s}}d\sigma_\xi
 d\sigma_\eta=\sum_{j=0}^{+\infty}(A_{n,s}(j)-1)\|\pi_jr_2\|_{L^2(\mathbb{S}^n)}^2.
\end{align*}
This together with \eqref{int2} gives that
\begin{align*}
&B_{n,s}\int_{\mathbb{S}^n}\int_{\mathbb{S}^n}\frac{|r_2(\xi)-r_2(\eta)|^2}{|\xi-\eta|^{n+2s}}d\sigma_\xi
 d\sigma_\eta\\
 &\ \ \geq \sum_{j=K}^{+\infty}(A_{n,s}(j)-1)\|\pi_jr_2\|_{L^2(\mathbb{S}^n)}^2\\
 &\ \ \geq (A_{n,s}(K)-1)\left(\|r_2\|_{L^2(\mathbb{S}^n)}^2-\|\Pi_Kr_2\|_{L^{2}(\mathbb{S}^{n})}^2\right)\\
 &\ \ \geq (A_{n,s}(K)-1)\left(1-3^{K}\gamma^{-\frac{q}{2}}\delta_0^{\frac{q}{4}}\right)\|r_2\|^2_{L^{2}(\mathbb{S}^{n})}.
\end{align*}
Then we conclude that
\begin{align*}
I_2&=(1-\theta\varepsilon_0)\big(B_{n,s}\int_{\mathbb{S}^n}\int_{\mathbb{S}^n}\frac{|r_2(\xi)-r_2(\eta)|^2}{|\xi-\eta|^{n+2s}}d\sigma_\xi
 d\sigma_\eta+\int_{\mathbb{S}^n}r_2^2d\sigma_\xi\big)\\
&\ \ +(1-q-C_{\varepsilon_1, \varepsilon_2}\theta-\sigma_0\theta)\int_{\mathbb{S}^n}r_2^2d\sigma_\xi\\
&\geq (1-\frac{4s}{n-2s}\varepsilon_0)(A_{n,s}(K)-1)\left(1-3^{K}\gamma^{-\frac{q}{2}}\delta^{\frac{q}{4}}\right)\|r_2\|_{L^{2}(\mathbb{S}^{n})}\\
&\ \ -\frac{4s}{n-2s}\left(1+C_{\varepsilon_1, \varepsilon_2}+\sigma_0+\varepsilon_0\right)\|r_2\|_{L^{2}(\mathbb{S}^{n})}.
\end{align*}
Choose $\delta_0$ satisfying $$\left(1-3^{K}\gamma^{-\frac{q}{2}}\delta_0^{\frac{q}{4}}\right)\geq \frac{2}{3},$$
that is
\begin{align}\label{delta}
\delta_0\leq 3^{(-K-1)\frac{4}{q}}(\varepsilon_1/2)^2.
\end{align}

Denote
\begin{align}\label{delta2}
\delta_2:=3^{2(-K-1)}(\varepsilon_1/2)^2
\end{align}
and we choose $\delta_0\leq \delta_2$, then (\ref{delta}) holds since $2\leq q$. Noticing
\begin{equation}\begin{split}
A_{n,s}(K)-1&=\frac{\Gamma(\frac{n}{2}-s)\Gamma(\frac{n}{2}+s+K)}{\Gamma(\frac{n}{2}+s)\Gamma(\frac{n}{2}-s+K)}-1\\
&=\frac{(\frac{n}{2}+s)(\frac{n}{2}+s+1)\cdot\cdot\cdot(\frac{n}{2}+s+K-1)}{(\frac{n}{2}-s)(\frac{n}{2}-s+1)\cdot\cdot\cdot(\frac{n}{2}-s+K-1)}-1\\
&=\left(1+\frac{4s}{n-2s}\right)\left(1+\frac{4s}{n-2s+2}\right)\cdot\cdot\cdot\left(1+\frac{4s}{n-2s+2K-2}\right)-1\\
&\geq s\sum_{j=1}^{K}\left(\frac{4}{n-2s+2j-2}\right),
\end{split}\end{equation}
and $$\sum_{j=1}^{+\infty}\left(\frac{4}{n-2s+2j-2}\right)=+\infty,$$
we can choose $K$ sufficiently large, which is independent of $s$ such that
$$\frac{2}{3}(1-\frac{4s}{n-2s}\varepsilon_0)(A_{n,s}(k)-1)\geq \frac{4s}{n-2s}\left(1+C_{\varepsilon_1, \varepsilon_2}+\sigma_0+\varepsilon_0\right).$$
In fact since $\frac{4s}{n-2s}<1$, $n-2s>\frac{2n}{3}$ and $\sigma_0=\frac{2}{q}\varepsilon_2\leq \varepsilon_2$, we only need to choose $K$ big enough such that
$$\frac{2}{3}(1-\varepsilon_0)\sum_{j=1}^{K}\left(\frac{4}{n+2j-2}\right)\geq \frac{6}{n}\left(1+C_{\varepsilon_1, \varepsilon_2}+\varepsilon_2+\varepsilon_0\right).$$
Now it is obvious that $K$ is independent of $s$. In fact, we can choose $K$ independent of $n$ as well. That is because by the fact $\frac{n}{n+2j-2}\geq \frac{1}{1+2j-2}$ we only need to choose $K$ such that
$$\frac{2}{3}(1-\varepsilon_0)\sum_{j=1}^{K}\left(\frac{4}{1+2j-2}\right)\geq 6\left(1+C_{\varepsilon_1, \varepsilon_2}+\varepsilon_2+\varepsilon_0\right).$$
Then we obtain that $I_3\geq 0$ when $\delta_0\leq \delta_2$.

To summarize, if we choose
\begin{align}\label{delta0}
\delta_0=\min\{\delta_1,\delta_2\},
\end{align}
 where $\delta_1$
 and $\delta_2$ are defined by (\ref{delta1}) and (\ref{delta2}) which are independent of $s$ and $n$,  we can
prove $I_i\geq 0$ for $i=1,2,3$, which implies Lemma \ref{local sta}.

\section{Stability of fractional Sobolev inequalities with the optimal asymptotic lower bound}
In this section we will set up the stability of the fractional Sobolev inequalities with optimal asymptotic lower bound by adapting the strategy of the author's previous work~\cite{CLT1}. For reader's convenience, we give the outline of the proof here and for more details we refer the reader to see \cite{CLT1}. The proof is divided into five steps.
\vskip0.1cm

In Step 1, denote $2^\ast_s=\frac{2n}{n-2s}$ and define
\begin{equation*}\begin{split}
\nu(\delta)=\inf\left\{\mathcal{S}_{s}(f): f\geq 0, \inf_{g\in M_S}\|A_{2s}^{1/2}(f-g)\|_2^2\leq \delta\|A_{2s}^{1/2}f\|_2^2\right\},
\end{split}\end{equation*}
where $$\mathcal{S}_{s}(f)=\frac{\|A_{2s}^{1/2}f\|_2^2-\mathcal{S}_{s,n}\|f\|^2_{2^\ast_s}}{\inf\limits_{h\in M_{S}}\|A_{2s}^{1/2}(f-h)\|^2_{2}}.$$
We can establish the relation between the local stability for Hardy-Littlewood-Sobolev inequality and fractional sobolev inequality on the sphere for non-negative functions. This can be proved by applying the duality method of Carlen from \cite{Carlen}.
Denote
\begin{align*}
\mathcal{S}_{HLS}(g)=\frac{\|g\|^2_{\frac{2n}{n+2s}}-\mathcal{S}_{s,n}\|\mathcal{P}_{2s}g\|_2^2}{\inf\limits_{h\in M_{HLS}}\|g-h\|^2_{\frac{2n}{n+2s}}},
\end{align*}
where the operator $\mathcal{P}_{2s}$ is given by
$$\mathcal{P}_{2s}g(\eta):=\frac{\Gamma(\frac{n-2s}{2})}{2^{2s}\pi^{\frac{n}{2}}\Gamma(s)}\int_{\mathbb{S}^n}\frac{g(\omega)}{|\eta-\omega|^{n-2s}}d\omega,$$
and
$$M_{HLS}=\{c(1-\xi\cdot\omega)^{-\frac{n+2s}{2}}:\xi\in B^{n+1},c\in\mathbb{R}\},$$
is the optimizer set of sharp Hardy-Littlewood-Sobolev inequality on the sphere.
Define
$$\mu(\delta)=\inf\{\mathcal{S}_{HLS}(g): g>0, \inf_{h\in M_{HLS}}\|g-h\|^2_{\frac{2n}{n+2s}}\leq \delta\|g\|^2_{\frac{2n}{n+2s}}\},$$
we can then prove that $$\mu(\frac{\delta}{2})\geq \frac{1}{2}\frac{n-2s}{n+2s}\min\{\nu(\delta)\frac{n-2s}{n+2s},1\}.$$
\vskip0.3cm
In Step 2, we establish the relationship between the local stability and the global stability of Hardy-Littlewood-Sobolev inequality for positive functions.
The proof is in the spirit of the work  by Figalli et al. in \cite{DEFFL} establishing the relationship between the local stability and the global stability of fractional Sobolev inequality for positive functions. We apply the rearrangement flow technique for HLS integral replacing the rearrangement flow technique for $L^2$ integral of the gradient. More precisely, we can prove that for any $0\leq g \in L^{\frac{2n}{n+2s}}(\mathbb{R}^n)$ satisfying
$$\inf\limits_{h\in M_{HLS}}\|g-h\|^2_{\frac{2n}{n+2s}}> \delta\|g\|^2_{\frac{2n}{n+2s}},$$ there holds
$$\mathcal{S}_{HLS}(g)\geq \delta \mu(\delta).$$

\vskip 0.3cm

In Step 3, by Lemma~\ref{local stability}, we have
$$\nu(\frac{\delta_0}{1+\delta_0})\geq \frac{4s}{n-2s}\varepsilon_0.$$
Combining the Step 1 and Step 2, we derive the stability of HLS inequality with the optimal asymptotic lower bound for positive function
$$\mathcal{S}_{HLS}(g)\geq \frac{\delta_0}{1+\delta_0}\frac{1}{4}\frac{n-2s}{n+2s}\min\{4s\frac{\varepsilon_0}{n+2s},1\}:=K_{n,s}.$$
\vskip0.3cm

In Step 4, we establish the stability of HLS inequality with the optimal lower bound for general function. This is achieved by establishing the relation between the stability of HLS inequality for positive function and general function. Denoting by $C_{HLS}$ and $C^{pos}_{HLS}$ the optimal constants of stability of HLS inequality for positive function and general function respectively, we prove that
$$C_{HLS}\geq \frac{1}{2}\min\{C^{pos}_{HLS},\min\{2^{\frac{n+2s}{n}}-2,1\}\}.$$
\vskip0.3cm

In Step 5, by dual method we can prove that
$$\inf\limits_{f\in H^s(\mathbb{S}^n)\setminus M_s}\mathcal{S}_{S}(f)\geq \frac{\min\{K_{n,s},\min\{2^{\frac{n+2s}{n}}-2,1\}\}}{4}.$$
Let us denote
\begin{align}\label{beta}
\beta_{s,n}=n\frac{\min\{K_{n,s},\min\{2^{\frac{n+2s}{n}}-2,1\}\}}{4s}.
\end{align}
Then it follows that $\lim\limits_{(s,n)\rightarrow (0, +\infty)}\beta_{s,n}=\frac{\delta_0\varepsilon_0}{4(1+\delta_0)}$. Then we  accomplish the proof of Theorem \ref{sta of so}.

\section{Global stability of log-Sobolev stability on the sphere }
In the section, we will establish the global stability of log-Sobolev inequality on the sphere by the stability of fractional Sobolev inequality with the optimal asymptotic lower bound and approximation method. Beckner~\cite{Be1997} found that the log-Sobolev inequality on the sphere can be obtained as a limit of sharp Sobolev inequalities. Since we have obtained the optimal asymptotic lower bound for stability of the fractional Sobolev inequality in Theorem \ref{sta of so}, this makes it possible to derive the global stability of log-Sobolev inequality via the end-point differentiation method. In fact, we will  do the limiting process to the stability of fractional Sobolev inequality by letting $s\rightarrow 0$.
\vskip0.1cm

Recalling the stability of fractional Sobolev inequalities with the optimal asymptotic lower bound (Theorem~\ref{sta of so}):
$$\|A^{1/2}_{2s}u\|_2^2-\frac{\Gamma(n/2+s)}{\Gamma(n/2-s)}|\mathbb{S}^n|^{\frac{2s}{n}}\|u\|_{2^\ast_s}^2\geq \frac{s}{n}\beta_{s,n}\inf_{h\in M_{s}}\|A^{1/2}_{2s}(u-h)\|_{2}^2.$$
Then by the fractional Sobolev inequality, there holds
\begin{align}\label{sta of so1}
\|A^{1/2}_{2s}u\|_2^2-\frac{\Gamma(n/2+s)}{\Gamma(n/2-s)}|\mathbb{S}^n|^{\frac{2s}{n}}\|u\|_{2^\ast_s}^2\geq \frac{s}{n}\mathcal{S}_{s,n}\beta_{s,n} \inf_{h\in M_{s}}\|u-h\|_{2^\ast_s}^2.
\end{align}
In order to prove Theorem \ref{sta of lg}, we first prove that when $s\rightarrow 0$, the $L^{2^\ast_s}$ distance $\inf_{h\in M_s}\|f-h\|_{2^\ast_s}$ can be bounded from below by the $L^{2}$ distance $\inf_{h\in M_0}\|f-h\|_{2}$.
\begin{lemma}\label{s goes 0}
If $f\in C(\mathbb{S}^n)$, then
$$\lim_{s\rightarrow 0}\inf_{h\in M_s}\|u-h\|_{2^\ast_s}^2 \geq \inf_{h\in M_0}\|u-h\|_{2}^2.$$
\end{lemma}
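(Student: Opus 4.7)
The plan is to take a near-minimizing sequence in $M_s$ as $s\to 0$ and either extract a limit lying in $M_0$ or show that the concentration scenario only increases the distance to $f$. I fix a sequence $s_k\downarrow 0$ realising the liminf and select
$h_k(\omega) = c_k\bigl(\sqrt{1-|\xi_k|^2}/(1-\xi_k\cdot\omega)\bigr)^{(n-2s_k)/2}\in M_{s_k}$
with $\|f-h_k\|_{2^\ast_{s_k}}^2\leq \inf_{h\in M_{s_k}}\|f-h\|_{2^\ast_{s_k}}^2+1/k$. Since $0\in M_{s_k}$, the triangle inequality forces $\|h_k\|_{2^\ast_{s_k}}\leq 2\|f\|_{2^\ast_{s_k}}+o(1)$, while the conformal identity $\int_{\mathbb{S}^n}\bigl((1-|\xi|^2)/(1-\xi\cdot\omega)^2\bigr)^{n/2}\,d\omega=|\mathbb{S}^n|$ gives $\|h_k\|_{2^\ast_{s_k}}=|c_k|\,|\mathbb{S}^n|^{(n-2s_k)/(2n)}$; as $f\in C(\mathbb{S}^n)$ implies $\|f\|_{2^\ast_{s_k}}\to\|f\|_2$, the coefficients $c_k$ are uniformly bounded.

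I then split according to the behaviour of $\xi_k$. In Case~1, $|\xi_k|$ stays bounded away from $1$ on a subsequence; along a further subsequence $\xi_k\to\xi_0\in B^{n+1}$ and $c_k\to c_0$. Setting $h_0(\omega):=c_0\bigl(\sqrt{1-|\xi_0|^2}/(1-\xi_0\cdot\omega)\bigr)^{n/2}\in M_0$ (the prefactor is absorbed into the coefficient), the uniform lower bound on $1-\xi_k\cdot\omega$ together with $(n-2s_k)/2\to n/2$ yields $h_k\to h_0$ uniformly on $\mathbb{S}^n$. Continuity of $f$ and $2^\ast_{s_k}\to 2$ then upgrade this to $|f-h_k|^{2^\ast_{s_k}}\to|f-h_0|^2$ uniformly, hence $\|f-h_k\|_{2^\ast_{s_k}}^2\to\|f-h_0\|_2^2\geq\inf_{h\in M_0}\|f-h\|_2^2$.

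In Case~2, $|\xi_k|\to 1$ and $\xi_k/|\xi_k|\to p\in\mathbb{S}^n$ on a subsequence; then $h_k\to 0$ uniformly on each compact subset of $\mathbb{S}^n\setminus\{p\}$, since on such a set $|h_k|\leq|c_k|\bigl(\sqrt{1-|\xi_k|^2}/c_{\delta}\bigr)^{(n-2s_k)/2}\to 0$. For fixed $\delta>0$, uniform convergence plus boundedness of $f$ yields
\begin{equation*}
\int_{\mathbb{S}^n\setminus B(p,\delta)}|f-h_k|^{2^\ast_{s_k}}\,d\omega\;\longrightarrow\;\int_{\mathbb{S}^n\setminus B(p,\delta)}|f|^2\,d\omega.
\end{equation*}
Because $\|f-h_k\|_{2^\ast_{s_k}}^2=\bigl(\int_{\mathbb{S}^n}|f-h_k|^{2^\ast_{s_k}}\,d\omega\bigr)^{(n-2s_k)/n}$ and $(n-2s_k)/n\to 1$, this gives $\liminf_k\|f-h_k\|_{2^\ast_{s_k}}^2\geq\int_{\mathbb{S}^n\setminus B(p,\delta)}|f|^2\,d\omega$. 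Sending $\delta\to 0$ yields $\liminf_k\|f-h_k\|_{2^\ast_{s_k}}^2\geq\|f\|_2^2\geq\inf_{h\in M_0}\|f-h\|_2^2$, using $0\in M_0$.

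The main obstacle is the concentration scenario of Case~2: one has to pass to the liminf while the exponent $2^\ast_{s_k}$ is still varying, and make sure that the bubble accumulating at $p$ can only add, never subtract, $L^2$-mass in the limit. Continuity of $f$ is used crucially to guarantee uniform convergence of $|f-h_k|^{2^\ast_{s_k}}$ away from the blow-up point and to identify the limiting integrand with $|f|^2$; for mere $L^p$ data one would need a more delicate truncation argument.
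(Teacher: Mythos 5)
Your argument is correct and follows the same skeleton as the paper's proof: bound the coefficients $c_k$ using the conformal normalization of the bubbles, pass to a subsequential limit $(\tilde c,\tilde\xi)$, and split into the compact case ($\xi_k$ staying in a compact subset of $B^{n+1}$) and the concentration case ($|\xi_k|\to 1$), where the key point in both treatments is that the concentrating bubble carries no $L^2$-mass in the limit. Two differences are worth noting. First, you work with $1/k$-near minimizers, which lets you bypass the paper's preliminary claim that the infimum over $M_s$ is attained (or realizable with $c_s=0$); this is a small but genuine simplification. Second, in the concentration case you excise a ball $B(p,\delta)$ around the blow-up point, use nonnegativity of the integrand to drop it, and exploit uniform convergence of $h_k\to 0$ away from $p$ before sending $\delta\to 0$; the paper instead lower-bounds the $L^{2^\ast_s}$-norm by the $L^2$-norm via H\"older's inequality and computes explicitly, through stereographic projection, that $\bigl\|\bigl(\sqrt{1-|\xi_s|^2}/(1-\xi_s\cdot\omega)\bigr)^{(n-2s)/2}\bigr\|_2\to 0$. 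Your excision device is more elementary (no explicit integral computation) at the mild cost of an extra limit in $\delta$, and it relies on the same hypothesis $f\in C(\mathbb{S}^n)$ to identify the limiting integrand with $|f|^2$; both routes land on $\liminf_k\|f-h_k\|_{2^\ast_{s_k}}^2\geq\|f\|_2^2\geq\inf_{h\in M_0}\|f-h\|_2^2$.
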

\begin{proof}
First we claim that for any fixed $0<s<n/2$, there exist $c_s\in \mathbb{R}$ and $\xi_s\in \mathbb{R}^{n+1}$ with $|\xi_s|< 1$ such that
$$\|u-c_s(\frac{\sqrt{1-|\xi_s|^2}}{1-\xi_s\cdot\omega})^{\frac{n-2s}{2}}\|_{2^\ast_s}=\inf_{c\in \mathbb{R},\xi\in \mathbb{R}^{n+1}, |\xi|<1}\|u-c(\frac{\sqrt{1-|\xi|^2}}{1-\xi\cdot\omega})^{\frac{n-2s}{2}}\|_{2^\ast_s}
=\inf_{h\in M_s}\|u-h\|_{2^\ast_s}.$$
Let $F(c,\xi)=\|u(\omega)-c(\frac{\sqrt{1-|\xi|^2}}{1-\xi\cdot\omega})^{\frac{n-2s}{2}}\|_{2^\ast_s}$, then $F(c,\xi)$ is a continuous function on $\mathbb{R}\times B^n$, where
$B^{n+1}=\{x\in \mathbb{R}^{n+1}||x|<1\}$. Assume $c_k\in \mathbb{R}$ and $\xi_{k}\in B^{n+1}$ such that
$$\lim_{k\rightarrow \infty}\|u-c_k(\frac{\sqrt{1-|\xi_k|^2}}{1-\xi_k\cdot\omega})^{\frac{n-2s}{2}}\|_{2^\ast_s}=\inf_{c\in \mathbb{R},\xi\in \mathbb{R}^{n+1}, |\xi|<1}\|u-c(\frac{\sqrt{1-|\xi|^2}}{1-\xi\cdot\omega})^{\frac{n-2s}{2}}\|_{2^\ast_s}.$$
Since
$$\|u\|_{2^\ast_s}\geq \inf_{c\in \mathbb{R},\xi\in \mathbb{R}^{n+1}, |\xi|<1}\|u-c(\frac{\sqrt{1-|\xi|^2}}{1-\xi\cdot\omega})^{\frac{n-2s}{2}}\|_{2^\ast_s}$$
and
$$\|u-c_k(\frac{\sqrt{1-|\xi_k|^2}}{1-\xi_k\cdot\omega})^{\frac{n-2s}{2}}\|_{2^\ast_s}\geq |c_k|-\|u\|_{2^\ast_s}$$
by the triangle inequality and the fact $\|(\frac{\sqrt{1-|\xi|^2}}{1-\xi\cdot\omega})^{\frac{n-2s}{2}}\|_{2^\ast_s}=1$, we know $\{c_k\}$ must be bounded. Then there exist $c_0\in \mathbb{R}$
and $\xi_0\in \mathbb{R}^{n+1}$ with $|\xi_0|\leq 1$ satisfying $(c_k,\xi_k)\rightarrow(c_0,\xi_0)$ (up to a subsequence). Now if $|\xi_0|<1$, by the continuity of $F(c,\xi)$ we obtain
$$F(c_0,\xi_0)=\inf_{c\in \mathbb{R},\xi\in \mathbb{R}^{n+1}, |\xi|<1}\|u-c(\frac{\sqrt{1-|\xi|^2}}{1-\xi\cdot\omega})^{\frac{n-2s}{2}}\|_{2^\ast_s}.$$
If $|\xi_0|=1$, by the Fatou lemma we have
\begin{align*}
& \|u\|_{2^\ast_s}=\|\liminf_{k\rightarrow \infty}[u-c_k(\frac{\sqrt{1-|\xi_k|^2}}{1-\xi_k\cdot\omega})^{\frac{n-2s}{2}}]\|_{2^\ast_s}\\
& \leq \liminf_{k\rightarrow \infty}\|u-c_k(\frac{\sqrt{1-|\xi_k|^2}}{1-\xi_k\cdot\omega})^{\frac{n-2s}{2}}\|_{2^\ast_s}
 =\inf_{h\in M_s}\|u-h\|_{2^\ast_s} \leq \|u\|_{2^\ast_s},
\end{align*}
which means we can choose $c_s=0$ and any $|\xi_s|<1$ such that
$$\|u-c_s(\frac{\sqrt{1-|\xi_s|^2}}{1-\xi_s\cdot\omega})^{\frac{n-2s}{2}}\|_{2^\ast_s}=\inf_{c\in \mathbb{R},\xi\in \mathbb{R}^{n+1}, |\xi|<1}\|u-c(\frac{\sqrt{1-|\xi|^2}}{1-\xi\cdot\omega})^{\frac{n-2s}{2}}\|_{2^\ast_s}.$$
Then we complete the proof of the claim.
\vskip0.1cm

Again using the fact
$$\|u\|_{2^\ast_s}\geq\|u-c_s(\frac{\sqrt{1-|\xi_s|^2}}{1-\xi_s\cdot\omega})^{\frac{n-2s}{2}}\|_{2^\ast_s}\geq |c_s|-\|u\|_{2^\ast_s}$$
we get
$$|c_s|\leq 2\|u\|_{2^\ast_s}\leq 2\|u\|_{\infty}\max\{|\mathbb{S}^n|^{1/2},1\}.$$
Since $c_s$ is bounded, we may assume $c_s\rightarrow\tilde{c}$ as $s\rightarrow 0$.
\vskip0.1cm

For the case $\tilde{c}=0$, there holds
\begin{align*}
& \lim_{s\rightarrow 0}\inf_{h\in M_s}\|u-h\|_{2^\ast_s}=\lim_{s\rightarrow 0}\|u-c_s(\frac{\sqrt{1-|\xi_s|^2}}{1-\xi_s\cdot\omega})^{\frac{n-2s}{2}}\|_{2^\ast}\\
& \geq \lim_{s\rightarrow 0}(\|u\|_{2^\ast}-|c_s|)=\|u\|_{2}\geq \inf_{h\in M_0}\|u-h\|_{2},
\end{align*}
which is the desired result.
\vskip0.1cm

If $\tilde c\neq 0$, since $|\xi_s|<1$, we may assume $\xi_s\rightarrow \tilde \xi$ with $|\tilde \xi|\leq 1$. If $|\tilde \xi|<1$, by the LDCT we
have
$$\lim_{s\rightarrow 0}\inf_{h\in M_s}\|u-h\|_{2^\ast}=\lim_{s\rightarrow 0}\|u-c_s(\frac{\sqrt{1-|\xi_s|^2}}{1-\xi_s\cdot\omega})^{\frac{n-2s}{2}}\|_{2^\ast}=\|u-\tilde c(\frac{\sqrt{1-|\tilde \xi|^2}}{1-\tilde \xi\cdot\omega})^{\frac{n}{2}}\|_2\geq \inf_{h\in M_0}\|u-h\|_{2}.$$
Then we only need to handle the case $|\tilde \xi|=1$. When $|\tilde \xi|=1$, by H\"{o}lder's inequality and the triangle inequality we have
\begin{align}\label{est1}\nonumber
& \|u-c_s(\frac{\sqrt{1-|\xi_s|^2}}{1-\xi_s\cdot\omega})^{\frac{n-2s}{2}}\|_{2^\ast}\geq |\mathbb{S}^n|^{\frac s n}\|u-c_s(\frac{\sqrt{1-|\xi_s|^2}}{1-\xi_s\cdot\omega})^{\frac{n-2s}{2}}\|_{2}\\
& \geq |\mathbb{S}^n|^{\frac s n}(\|u\|_2-|c_s|\|(\frac{\sqrt{1-|\xi_s|^2}}{1-\xi_s\cdot\omega})^{\frac{n-2s}{2}}\|_{2}).
\end{align}
Now let us prove that $\int_{\mathbb{S}^n}(\frac{\sqrt{1-|\xi_s|^2}}{1-\xi_s\cdot\omega})^{n-2s}dw\rightarrow 0$ as $s\rightarrow 0$. Choose $s$ small enough such that $n-2s>n/2$. Since $|\xi_s|\rightarrow 1$ as $s\rightarrow 0$, then by the stereographic projection we have
\begin{align}\label{est2}\nonumber
& \int_{\mathbb{S}^n}(\frac{\sqrt{1-|\xi_s|^2}}{1-\xi_s\cdot\omega})^{n-2s}dw=(\sqrt{1-|\xi_s|^2})^{n-2s}\int_{\mathbb{S}^n}(1-|\xi_s|\omega_{n+1})^{-(n-2s)}dw\\\nonumber
& =(\sqrt{1-|\xi_s|^2})^{n-2s}\int_{\mathbb{R}^n}(1-|\xi_s|\frac{1-|x|^2}{1+|x|^2})^{-(n-2s)}(\frac{2}{1+|x|^2})^ndx\\\nonumber
& =\left(2\sqrt{\frac{1+|\xi_s|}{1-|\xi_s|}}\right)^{n-2s}\int_{\mathbb{R}^n}(1+\frac{1+|\xi_s|}{1-|\xi_s|}|x|^2)^{-(n-2s)}(\frac{2}{1+|x|^2})^{2s}dx\\\nonumber
& \leq 2^n\left(\sqrt{\frac{1+|\xi_s|}{1-|\xi_s|}}\right)^{n-2s}\int_{\mathbb{R}^n}(1+\frac{1+|\xi_s|}{1-|\xi_s|}|x|^2)^{-(n-2s)}dx\\
& =2^{n}\left(\frac{1-|\xi_s|}{1+|\xi_s|}\right)^{\frac{n}{2}+s}\int_{\mathbb{R}^n}(1+|x|^2)^{-(n-2s)}dx\rightarrow 0,
\end{align}
as $s\rightarrow 0$. Then by (\ref{est1}) and (\ref{est2}) we know if $|\tilde{\xi}|=1$, there still holds
$$\lim_{s\rightarrow 0}\inf_{h\in M_s}\|u-h\|_{2^\ast}=\lim_{s\rightarrow 0}\|u-c_s(\frac{\sqrt{1-|\xi_s|^2}}{1-\xi_s\cdot\omega})^{\frac{n-2s}{2}}\|_{2^\ast}\geq \|u\|_2\geq \inf_{h\in M_0}\|u-h\|_{2},$$
which completes the proof of Lemma \ref{s goes 0}.
\end{proof}
Denote by $\mathcal{F}$ the space of functions $u$ on $\mathbb{S}^n$ with $u=\sum_{l,m}u_{l,m}Y_{l,m}$ such that only finitely many coefficients $u_{l,m}$ are nonzero. We see that $\mathcal{F}\subset \mathcal{D_{L}}$ and is dense in $\mathcal{D_{L}}$ with respect to the norm $\sqrt{\mathcal{E}(u,u)+\|u\|_2^2}$, where
 $$\mathcal{E}(u,u)=\iint_{\mathbb{S}^n\times \mathbb{S}^n}\frac{|u(\omega)-u(\eta)|^2}{|\omega-\eta|^n}d\omega d\eta.$$ We need the following Lemma.

\begin{lemma}\label{relation of lg so}
For any $u\in \mathcal{F}$, there holds
$$\iint\limits_{\mathbb{S}^n\times \mathbb{S}^n}\frac{|u(\omega)-u(\eta)|^2}{|\omega-\eta|^{n}}d\omega d\eta=\frac{4\pi^{\frac n 2}}{\Gamma(\frac{n}{2})}\lim_{s\rightarrow 0}\frac{1}{2s}\left(\|A_{2s}^{1/2}u\|_2^2-\frac{\Gamma(\frac{n}{2}+s)}{\Gamma(\frac{n}{2}-s)}\|u\|^2_2\right).$$
\end{lemma}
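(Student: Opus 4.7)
The plan is to prove the identity by directly computing the right-hand side using the double-integral representation of $\|A_{2s}^{1/2}u\|_2^2$ already invoked earlier in the paper (from Beckner~\cite{Be2015}):
\[
\|A_{2s}^{1/2}u\|_2^2 = |\mathbb{S}^n|\frac{\Gamma(n/2+s)}{\Gamma(n/2-s)}\left(B_{n,s}\iint_{\mathbb{S}^n\times\mathbb{S}^n}\frac{|u(\xi)-u(\eta)|^2}{|\xi-\eta|^{n+2s}}\,d\sigma_\xi d\sigma_\eta + \int_{\mathbb{S}^n}|u|^2\,d\sigma_\xi\right),
\]
where $d\sigma = d\omega/|\mathbb{S}^n|$. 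Since $\|u\|_2^2 = |\mathbb{S}^n|\int|u|^2 d\sigma$, subtracting the low-frequency term gives the clean identity
\[
\|A_{2s}^{1/2}u\|_2^2 - \frac{\Gamma(n/2+s)}{\Gamma(n/2-s)}\|u\|_2^2 = \frac{\Gamma(n/2+s)}{\Gamma(n/2-s)}\cdot\frac{B_{n,s}}{|\mathbb{S}^n|}\iint_{\mathbb{S}^n\times\mathbb{S}^n}\frac{|u(\xi)-u(\eta)|^2}{|\xi-\eta|^{n+2s}}\,d\omega_\xi d\omega_\eta.
\]

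Next, I would divide by $2s$ and send $s\to 0$, analyzing the three factors separately. First, $\Gamma(n/2+s)/\Gamma(n/2-s)\to 1$. Second, using the explicit form $B_{n,s}=2^{n+2s}\,s\,\Gamma(n/2+1)\Gamma(n/2-s)/[\Gamma(n+1)\Gamma(1-s)]$, one obtains $B_{n,s}/(2s)\to 2^{n-1}\Gamma(n/2+1)\Gamma(n/2)/\Gamma(n+1)$. Third, the double integral converges to $\iint |u(\xi)-u(\eta)|^2|\xi-\eta|^{-n}\,d\omega_\xi d\omega_\eta$ by dominated convergence. Finally I would combine all three pieces and simplify the prefactor
\[
\frac{2^{n-1}\Gamma(n/2+1)\Gamma(n/2)}{|\mathbb{S}^n|\,\Gamma(n+1)}
\]
to exactly $\Gamma(n/2)/(4\pi^{n/2})$ using $|\mathbb{S}^n|=2\pi^{(n+1)/2}/\Gamma((n+1)/2)$ together with the Legendre duplication formula $\Gamma(n+1)=2^n\pi^{-1/2}\Gamma((n+1)/2)\Gamma(n/2+1)$; rearranging then produces the claimed equality.

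The main obstacle is the passage of the limit inside the integral. This is where the hypothesis $u\in\mathcal{F}$ matters: any $u$ that is a finite linear combination of spherical harmonics is $C^\infty$ on $\mathbb{S}^n$, hence Lipschitz, so $|u(\xi)-u(\eta)|^2\leq C|\xi-\eta|^2$ uniformly. Therefore the integrand is bounded above by $C|\xi-\eta|^{2-n-2s}$ near the diagonal, which is uniformly integrable on $\mathbb{S}^n\times\mathbb{S}^n$ for $s$ in a small neighborhood of $0$ (since the exponent stays strictly above $-n$). Dominated convergence then legitimizes the interchange and closes the argument. An alternative route is purely spectral: expand $u=\sum u_{l,m}Y_{l,m}$ (finite sum), use $A_{2s}Y_{l,m}=\frac{\Gamma(l+n/2+s)}{\Gamma(l+n/2-s)}Y_{l,m}$ to compute the right-hand side term by term as $\sum_{l,m}[\psi(l+n/2)-\psi(n/2)]u_{l,m}^2$, and match this against the Funk--Hecke eigenvalues of the double integral on the left; the finiteness of the sum removes any convergence issue, though the constant-matching step still boils down to the same Gamma-function manipulations.
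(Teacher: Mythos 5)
Your argument is correct, and its main line is genuinely different from the paper's. The paper proves the lemma spectrally: it writes the left-hand side as the limit as $s\to 0$ of $\iint |u(\omega)-u(\eta)|^2\,|\omega-\eta|^{2s-n}\,d\omega\,d\eta$, applies the Funk--Hecke formula to the kernel $|\omega-\eta|^{2s-n}$ to express this as a finite sum over the coefficients $u_{l,m}^2$ with explicit Gamma-ratio eigenvalues, uses $s\Gamma(s)\to 1$ to identify the limit as $\frac{4\pi^{n/2}}{\Gamma(n/2)}\sum_{l,m}\bigl(\tfrac{\Gamma'(l+n/2)}{\Gamma(l+n/2)}-\tfrac{\Gamma'(n/2)}{\Gamma(n/2)}\bigr)u_{l,m}^2$, and then computes the right-hand limit term by term from the eigenvalues of $A_{2s}$ to get the same digamma sum — this is exactly the ``alternative route'' you sketch in your last sentence. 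Your primary route instead reuses Beckner's double-integral representation of $\|A_{2s}^{1/2}u\|_2^2$ already quoted in Section~2, observes that subtracting $\tfrac{\Gamma(n/2+s)}{\Gamma(n/2-s)}\|u\|_2^2$ cancels the zero-order term exactly, and passes to the limit by dominated convergence; I checked your constants, and the identity $2^{n+1}\pi^{n/2}\Gamma(\tfrac n2+1)=|\mathbb{S}^n|\,\Gamma(n+1)$ needed at the end does follow from the duplication formula as you say. What your route buys: it avoids the Funk--Hecke computation and the digamma bookkeeping, and the DCT step only needs $u$ Lipschitz (the domination by $C|\xi-\eta|^{2-n-2s}$ is uniform for, say, $0<s\le 1/2$), so it actually proves the statement for a larger class than $\mathcal{F}$. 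What the paper's route buys: the finiteness of the spectral sum makes the limit interchange trivial with no regularity discussion, and it exhibits the common value explicitly as a digamma sum. One caveat worth keeping in mind: your argument takes Beckner's representation (Lemma 10 and Theorem 11 of the cited reference) as a black box, and that identity is itself established via Funk--Hecke, so the two proofs are ultimately cousins; but as written your derivation is complete modulo that cited input.
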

\begin{proof}
Let us write
\begin{align*}
& \iint\limits_{\mathbb{S}^n\times \mathbb{S}^n}\frac{|u(\omega)-u(\eta)|^2}{|\omega-\eta|^{n-2s}}d\omega d\eta
=2\iint\limits_{\mathbb{S}^n\times \mathbb{S}^n}\frac{u(\omega)(u(\omega)-u(\eta))}{|\omega-\eta|^{n-2s}}d\omega d\eta\\
& =2\iint\limits_{\mathbb{S}^n\times \mathbb{S}^n}\frac{u^2(\omega)}{|\omega-\eta|^{n-2s}}d\omega d\eta
-2\iint\limits_{\mathbb{S}^n\times \mathbb{S}^n}\frac{u(\omega)u(\eta)}{|\omega-\eta|^{n-2s}}d\omega d\eta:=I+II.
\end{align*}
Assume that $u=\sum_{l,m}u_{l,m}Y_l$ and by the Funk-Hecke formula (see \cite[Eq.~(17)]{Be1993} and also  \cite[Corollary 4.3]{FrLi2012}), we have
$$II=\frac{2^{2s}\pi^{\frac n 2}\Gamma(s)}{\Gamma(\frac{n-2s}{2})}\sum_{l,m}\frac{\Gamma(l+\frac n 2-s)}{\Gamma(l+\frac n 2+s)}u^2_{l,m}.$$
And using the fact $\int_{\mathbb{S}^n}|\omega-\eta|^{2s-n}d\eta=\frac{2^{2s}\pi^{\frac n 2}\Gamma(s)}{\Gamma(\frac n 2-s)}$, we have
$$I=\frac{2^{2s}\pi^{\frac n 2}\Gamma(s)}{\Gamma(\frac n 2-s)} \sum_{l,m}u^2_{l,m}.$$
Then
\begin{align*}
\iint\limits_{\mathbb{S}^n\times \mathbb{S}^n}\frac{|u(\omega)-u(\eta)|^2}{|\omega-\eta|^{n}}d\omega d\eta&=\lim_{s\rightarrow 0}\iint\limits_{\mathbb{S}^n\times \mathbb{S}^n}\frac{|u(\omega)-u(\eta)|^2}{|\omega-\eta|^{n-2s}}d\omega d\eta\\
& =\lim_{s\rightarrow 0} \frac{2^{2s+2}\pi^{\frac n 2}s\Gamma(s)}{\Gamma(\frac{n-2s}{2})}\sum_{l,m}\left(\frac{1}{2s}(\frac{\Gamma(\frac{n}{2}-s)}{\Gamma(\frac{n}{2}+s)}-\frac{\Gamma(l+\frac{n}{2}-s)}{\Gamma(l+\frac{n}{2}+s)})\right)u^2_{l,m}\\
& =\frac{4\pi^{\frac n 2}}{\Gamma(\frac{n}{2})}\sum_{l,m}\left(\frac{\Gamma^\prime(l+\frac{n}{2})}{\Gamma(l+\frac{n}{2})}-\frac{\Gamma^\prime(\frac{n}{2})}{\Gamma(\frac{n}{2})}\right)u^2_{l,m}.
\end{align*}

Ont the other hand,
\begin{align*}
\lim_{s\rightarrow 0}\frac{1}{2s}\left((A_{2s}u,u)-\frac{\Gamma(\frac{n}{2}+s)}{\Gamma(\frac{n}{2}-s)}\|u\|^2_2\right)&=\lim_{s\rightarrow 0}\sum_{l,m}\frac{1}{2s}\left(\frac{\Gamma(l+\frac{n}{2}+s)}{\Gamma(l+\frac{n}{2}-s)}-\frac{\Gamma(\frac{n}{2}+s)}{\Gamma(\frac{n}{2}-s)}\right)u^2_{l,m}\\
& =\sum_{l,m}\left(\frac{\Gamma^\prime(l+\frac{n}{2})}{\Gamma(l+\frac{n}{2})}-\frac{\Gamma^\prime(\frac{n}{2})}{\Gamma(\frac{n}{2})}\right)u^2_{l,m}.
\end{align*}
That is
$$\iint\limits_{\mathbb{S}^n\times \mathbb{S}^n}\frac{|u(\omega)-u(\eta)|^2}{|\omega-\eta|^{n}}d\omega d\eta=\frac{4\pi^{\frac n 2}}{\Gamma(\frac{n}{2})}\lim_{s\rightarrow 0}\frac{1}{2s}\left(\|A_{2s}^{1/2}u\|_2^2-\frac{\Gamma(\frac{n}{2}+s)}{\Gamma(\frac{n}{2}-s)}\|u\|^2_2\right).$$
\end{proof}

Now we are in position to establish the stability of log-Sobolev on the sphere. By the stability of the fractional Sobolev inequality (\ref{sta of so1}) with the optimal asymptotic lower bound and Lemma \ref{relation of lg so}, we can obtain
\begin{align*}
& \iint\limits_{\mathbb{S}^n\times \mathbb{S}^n}\frac{|u(\omega)-u(\eta)|^2}{|\omega-\eta|^{n}}d\omega d\eta\geq
\frac{4\pi^{\frac n 2}}{\Gamma(\frac{n}{2})}\lim_{s\rightarrow 0}\frac{1}{2s}\frac{\Gamma(n/2+s)}{\Gamma(n/2-s)}\left(|\mathbb{S}^n|^{\frac{2s}{n}}\|u\|_{2^\ast}^2-\|u\|^2_2\right)\\
& +\frac{4\pi^{\frac n 2}}{\Gamma(\frac{n}{2})}\lim_{s\rightarrow 0}\frac{\Gamma(n/2+s)}{\Gamma(n/2-s)}\frac{\mathcal{S}_{s,n}s\beta_s}{2s}\inf_{h\in M_{s}}\|u-h\|_{2^\ast}^2:=I+II.
\end{align*}
Since
$$\lim_{s\rightarrow 0}\frac{\|u\|_{2^\ast}^2-\|u\|_2^2}{2s}=\frac{2}{n}\int_{\mathbb{S}^n}|u|^2\ln|f|d\omega-\|u\|_2^2\frac{\ln\|u\|_2^2}{n},$$
and
$$\lim_{s\rightarrow 0}\frac{|\mathbb{S}^n|\|u\|_2^2-\|u\|_2^2}{2s}=\frac{\ln|\mathbb{S}^n|}{n}\|u\|_2^2,$$
Then
\begin{align}\label{est of I}\nonumber
& I\geq \frac{4\pi^{\frac n 2}}{\Gamma(\frac{n}{2})}\lim_{s\rightarrow 0}\frac{1}{2s}\left(|\mathbb{S}^n|^{\frac{2s}{n}}\|u\|_{2^\ast}^2-|\mathbb{S}^n|^{\frac{2s}{n}}\|u\|_2^2+|\mathbb{S}^n|\|u\|_2^2-\|u\|_2^2\right)\\
& \geq \frac{4\pi^{\frac n 2}}{\Gamma(\frac{n}{2})n}\int_{\mathbb{S}^n}|u|^2\ln\frac{|u|^2|\mathbb{S}^n|}{\|u\|_2^2}dw.
\end{align}
On the other hand, by Lemma \ref{s goes 0} and the fact $\lim\limits_{s\rightarrow 0}\mathcal{S}_{s,n}\beta_{s,n}=\frac{\delta_0\varepsilon_0}{4n(1+\delta_0)}$, we know
\begin{align}\label{est of II}
II\geq \frac{\pi^{\frac n 2}}{\Gamma(\frac{n}{2})}\frac{\delta_0\varepsilon_0}{2n(1+\delta_0)}\lim_{s\rightarrow 0}\inf_{h\in M_s}\|u-h\|_{2^\ast}^2 \geq \frac{\pi^{\frac n 2}}{\Gamma(\frac{n}{2})}\frac{\delta_0\varepsilon_0}{2n(1+\delta_0)}\inf_{h\in M_0}\|u-h\|_{2}^2.
\end{align}
Denote $\alpha_n=\frac{\pi^{\frac n 2}}{2n\Gamma(\frac{n}{2})}\frac{\delta_0\varepsilon_0}{1+\delta_0}$. Thus combining (\ref{est of I}) and (\ref{est of II}), we obtain the global stability of log-Sobolev inequality on the sphere
$$\iint\limits_{\mathbb{S}^n\times \mathbb{S}^n}\frac{|u(\omega)-u(\eta)|^2}{|\omega-\eta|^{n}}d\omega d\eta- \frac{4\pi^{\frac n 2}}{\Gamma(\frac{n}{2})n}\int_{\mathbb{S}^n}|u|^2\ln\frac{|u|^2|\mathbb{S}^n|}{\|u\|_2^2}dw
\geq \alpha_n \inf_{h\in M_0}\|u-h\|_{2}^2.$$

\section{Proof of Lemma \ref{lem split}}
In this section, we give the proof of Lemma \ref{lem split}. Setting $\widetilde{r}=r_1+r_2$. We first prove that
\begin{equation}\begin{split}
&\int_{\mathbb{S}^n}\int_{\mathbb{S}^n}\frac{|r(\xi)-r(\eta)|^2}{|\xi-\eta|^{n+2s}}d\sigma_\xi d\sigma_\eta\\
&\ \ \geq \int_{\mathbb{S}^n}\int_{\mathbb{S}^n}\frac{|\widetilde{r}(\xi)-\widetilde{r}(\eta)|^2}{|\xi-\eta|^{n+2s}}d\sigma_\xi d\sigma_\eta+\int_{\mathbb{S}^n}\int_{\mathbb{S}^n}\frac{|r_3(\xi)-r_3(\eta)|^2}{|\xi-\eta|^{n+2s}}d\sigma_\xi d\sigma_\eta.
\end{split}\end{equation}
According to the definition of $r_1$, $r_2$ and $r_3$, we know that
\begin{equation}\widetilde{r}(\xi)=\begin{cases}
&r(\xi),\ \ r(\xi)<M\\
&M,\ \ r(\xi)\geq M
\end{cases}\end{equation}
and
\begin{equation}r_3(\xi)=\begin{cases}
&0,\ \ r(\xi)<M\\
&r-M,\ \ r(\xi)\geq M
\end{cases}\end{equation}
Careful computation gives that
\begin{equation}\label{eq1}\begin{split}
&\int_{\mathbb{S}^n}\int_{\mathbb{S}^n}\frac{|r(\xi)-r(\eta)|^2}{|\xi-\eta|^{n+2s}}d\sigma_\xi d\sigma_\eta\\
&\ \ =\int_{\mathbb{S}^n}\int_{\mathbb{S}^n}\frac{|(\widetilde{r}(\xi)-\widetilde{r}(\eta))+(r_3(\xi)-r_3(\eta))|^2}{|\xi-\eta|^{n+2s}}d\sigma_\xi d\sigma_\eta\\
&\ \ =\int_{\mathbb{S}^n}\int_{\mathbb{S}^n}\frac{|\widetilde{r}(\xi)-\widetilde{r}(\eta)|^2}{|\xi-\eta|^{n+2s}}d\sigma_\xi d\sigma_\eta+\int_{\mathbb{S}^n}\int_{\mathbb{S}^n}\frac{|r_3(\xi)-r_3(\eta)|^2}{|\xi-\eta|^{n+2s}}d\sigma_\xi d\sigma_\eta\\
&\ \ \ \ +2\int_{\mathbb{S}^n}\int_{\mathbb{S}^n}\frac{(\widetilde{r}(\xi)-\widetilde{r}(\eta))\cdot(r_3(\xi)-r_3(\eta))}{|\xi-\eta|^{n+2s}}d\sigma_\xi d\sigma_\eta.
\end{split}\end{equation}
We will prove that
$$\int_{\mathbb{S}^n}\int_{\mathbb{S}^n}\frac{(\widetilde{r}(\xi)-\widetilde{r}(\eta))\cdot(r_3(\xi)-r_3(\eta))}{|\xi-\eta|^{n+2s}}d\sigma_\xi d\sigma_\eta\geq 0.$$
Denote by $g(\xi,\eta)=\left(\widetilde{r}(\xi)-\widetilde{r}(\eta)\right)\cdot\left(r_3(\xi)-r_3(\eta)\right)$ and careful computations give that
\begin{equation}\label{eq2}\begin{split}
&\int_{\mathbb{S}^n}\int_{\mathbb{S}^n}\frac{(\widetilde{r}(\xi)-\widetilde{r}(\eta))\cdot(r_3(\xi)-r_3(\eta))}{|\xi-\eta|^{n+2s}}d\sigma_\xi d\sigma_\eta\\
&\ \ =\int_{r(\xi)<M}\int_{r(\eta)<M}\frac{g(\xi,\eta)}{|\xi-\eta|^{n+2s}}d\sigma_\xi d\sigma_\eta+\int_{r(\xi)\geq M}\int_{r(\eta)\geq M}\frac{g(\xi,\eta)}{|\xi-\eta|^{n+2s}}d\sigma_\xi d\sigma_\eta\\
&\ \ \ \  +\int_{r(\xi)<M}\int_{r(\eta)\geq M}\frac{g(\xi,\eta)}{|\xi-\eta|^{n+2s}}d\sigma_\xi d\sigma_\eta+\int_{r(\xi)\geq M}\int_{r(\eta)< M}\frac{g(\xi,\eta)}{|\xi-\eta|^{n+2s}}d\sigma_\xi d\sigma_\eta\\
&\ \ =\int_{r(\xi)<M}\int_{r(\eta)\geq M}\frac{g(\xi,\eta)}{|\xi-\eta|^{n+2s}}d\sigma_\xi d\sigma_\eta+\int_{r(\xi)\geq M}\int_{r(\eta)< M}\frac{g(\xi,\eta)}{|\xi-\eta|^{n+2s}}d\sigma_\xi d\sigma_\eta\\
&\ \ =\int_{r(\xi)<M}\int_{r(\eta)\geq M}\frac{(r(\xi)-M)(0-r_3(\eta))}{|\xi-\eta|^{n+2s}}d\sigma_\xi d\sigma_\eta\\
&\ \ \ \ +\int_{r(\xi)\geq M}\int_{r(\eta)< M}\frac{(M-r(\eta))(r(\xi)-M)}{|\xi-\eta|^{n+2s}}d\sigma_\xi d\sigma_\eta\geq 0.
\end{split}\end{equation}
Combining \eqref{eq1} and \eqref{eq2}, we derive that \begin{equation}\begin{split}
&\int_{\mathbb{S}^n}\int_{\mathbb{S}^n}\frac{|r(\xi)-r(\eta)|^2}{|\xi-\eta|^{n+2s}}d\sigma_\xi d\sigma_\eta\\
&\ \ \geq \int_{\mathbb{S}^n}\int_{\mathbb{S}^n}\frac{|\widetilde{r}(\xi)-\widetilde{r}(\eta)|^2}{|\xi-\eta|^{n+2s}}d\sigma_\xi d\sigma_\eta+\int_{\mathbb{S}^n}\int_{\mathbb{S}^n}\frac{|r_3(\xi)-r_3(\eta)|^2}{|\xi-\eta|^{n+2s}}d\sigma_\xi d\sigma_\eta.
\end{split}\end{equation}
Now, in order to prove Lemma \ref{lem split}, we only need to show that
\begin{equation}\begin{split}
&\int_{\mathbb{S}^n}\int_{\mathbb{S}^n}\frac{|\widetilde{r}(\xi)-\widetilde{r}(\eta)|^2}{|\xi-\eta|^{n+2s}}d\sigma_\xi d\sigma_\eta\\
&\ \ \geq \int_{\mathbb{S}^n}\int_{\mathbb{S}^n}\frac{|r_1(\xi)-r_1(\eta)|^2}{|\xi-\eta|^{n+2s}}d\sigma_\xi d\sigma_\eta+\int_{\mathbb{S}^n}\int_{\mathbb{S}^n}\frac{|r_2(\xi)-r_2(\eta)|^2}{|\xi-\eta|^{n+2s}}d\sigma_\xi d\sigma_\eta.
\end{split}\end{equation}
It is equivalent to prove that
$$\int_{\mathbb{S}^n}\int_{\mathbb{S}^n}\frac{(r_1(\xi)-r_1(\eta))\cdot(r_2(\xi)-r_2(\eta))}{|\xi-\eta|^{n+2s}}d\sigma_\xi d\sigma_\eta\geq 0.$$
Denote by $h(\xi,\eta)=(r_1(\xi)-r_1(\eta))\cdot(r_2(\xi)-r_2(\eta))$ and careful computations give that
\begin{equation}\begin{split}
&\int_{\mathbb{S}^n}\int_{\mathbb{S}^n}\frac{(r_1(\xi)-r_1(\eta))\cdot(r_2(\xi)-r_2(\eta))}{|\xi-\eta|^{n+2s}}d\sigma_\xi d\sigma_\eta\\
&\ \ =\int_{r(\xi)<\gamma}\int_{r(\eta)<\gamma}\frac{h(\xi,\eta)}{|\xi-\eta|^{n+2s}}d\sigma_\xi d\sigma_\eta+\int_{r(\xi)\geq \gamma}\int_{r(\eta)\geq \gamma}\frac{h(\xi,\eta)}{|\xi-\eta|^{n+2s}}d\sigma_\xi d\sigma_\eta\\
&\ \ \ \  +\int_{r(\xi)<\gamma}\int_{r(\eta)\geq \gamma}\frac{h(\xi,\eta)}{|\xi-\eta|^{n+2s}}d\sigma_\xi d\sigma_\eta+\int_{r(\xi)\geq \gamma}\int_{r(\eta)< \gamma}\frac{h(\xi,\eta)}{|\xi-\eta|^{n+2s}}d\sigma_\xi d\sigma_\eta\\
&\ \ =\int_{r(\xi)<\gamma}\int_{r(\eta)\geq \gamma}\frac{h(\xi,\eta)}{|\xi-\eta|^{n+2s}}d\sigma_\xi d\sigma_\eta+\int_{r(\xi)\geq \gamma}\int_{r(\eta)< \gamma}\frac{h(\xi,\eta)}{|\xi-\eta|^{n+2s}}d\sigma_\xi d\sigma_\eta\\
&\ \ =\int_{r(\xi)<\gamma}\int_{r(\eta)\geq \gamma}\frac{(r(\xi)-\gamma)(0-r_2(\eta))}{|\xi-\eta|^{n+2s}}d\sigma_\xi d\sigma_\eta\\
&\ \ \ \ +\int_{r(\xi)\geq \gamma}\int_{r(\eta)<\gamma}\frac{(\gamma-r(\eta))(r_2(\xi)-0)}{|\xi-\eta|^{n+2s}}d\sigma_\xi d\sigma_\eta\geq 0.
\end{split}\end{equation}
This proves \begin{equation}\begin{split}
&\int_{\mathbb{S}^n}\int_{\mathbb{S}^n}\frac{|\widetilde{r}(\xi)-\widetilde{r}(\eta)|^2}{|\xi-\eta|^{n+2s}}d\sigma_\xi d\sigma_\eta\\
&\ \ \geq \int_{\mathbb{S}^n}\int_{\mathbb{S}^n}\frac{|r_1(\xi)-r_1(\eta)|^2}{|\xi-\eta|^{n+2s}}d\sigma_\xi d\sigma_\eta+\int_{\mathbb{S}^n}\int_{\mathbb{S}^n}\frac{|r_2(\xi)-r_2(\eta)|^2}{|\xi-\eta|^{n+2s}}d\sigma_\xi d\sigma_\eta.
\end{split}\end{equation}
Then the proof of Lemma \ref{lem split} is accomplished.
\medskip

\bibliographystyle{amsalpha}

\end{document}